\newtheorem{theorem}{Theorem}[section]
\newtheorem{lemma}[theorem]{Lemma}
\newtheorem{proposition}[theorem]{Proposition}
\newtheorem{corollary}[theorem]{Corollary}
\newtheorem{remark}[theorem]{Remark}
\theoremstyle{definition}
\theoremstyle{remark}
\newtheorem*{note*}{Note}
\numberwithin{equation}{section}
\newcommand{\rank}{\mathop{\operator@font rank}}
\newcommand{\conv}{\mathop{\operator@font conv}}
\newcommand{\vol}{\mathop{\operator@font vol}}
\newcommand{\onetagright}{\tagsleft@false}
\newcommand{\ls}{\leqslant}
\newcommand{\gr}{\geqslant}
\begin{document}
\small

\title{\bf On a version of the slicing problem for the surface area of convex bodies}

\medskip

\author{Silouanos Brazitikos and Dimitris--Marios Liakopoulos}

\date{}

\maketitle

\begin{abstract}\footnotesize
We study the slicing inequality for the surface area instead of volume. This is the question whether there exists a constant $\alpha_n$ depending
(or not) on the dimension $n$ so that
\begin{equation*}S(K)\ls\alpha_n|K|^{\frac{1}{n}}\max_{\xi\in S^{n-1}}S(K\cap\xi^{\perp })\end{equation*}
where $S$ denotes surface area and $|\cdot |$ denotes volume. For any fixed dimension we provide a negative answer to this question, as well
as to a weaker version in which sections are replaced by projections onto hyperplanes. We also study the same problem for sections and projections
of lower dimension and for all the quermassintegrals of a convex body. Starting from these questions, we also introduce a number of natural parameters relating volume and surface area, and provide optimal upper and lower bounds for them. Finally, we show that, in contrast to the previous negative
results, a variant of the problem which arises naturally from the surface area version of the equivalence of the isomorphic Busemann--Petty problem with the slicing problem has an affirmative answer.
\end{abstract}

\section{Introduction}

In this article we study the question whether it is possible to have a version of the slicing inequality for the surface area instead of volume.
More precisely, the question (which has been formulated by Koldobsky \cite{Koldobsky-private}) can be stated as follows: Is it true
that there exists a constant $\alpha_n$ depending (or not) on the dimension $n$ so that
\begin{equation}\label{eq:slicing-surface-1}S(K)\ls\alpha_n|K|^{\frac{1}{n}}\max_{\xi\in S^{n-1}}S(K\cap\xi^{\perp })\end{equation}
for every centrally symmetric convex body $K$ in ${\mathbb R}^n$? Here, $S(A)$ denotes surface area and $|A|$ denotes volume of
a convex body in the appropriate dimension, and $\xi^{\perp }=\{x\in {\mathbb R}^n:\ \langle x,\xi\rangle =0\}$ is the $(n-1)$-dimensional
subspace orthogonal to $\xi\in S^{n-1}$. A lower dimensional slicing problem may be also formulated; for any $2\ls k\ls n-1$ one may ask for a
constant $\alpha_{n,k}$ such that
\begin{equation}\label{eq:slicing-surface-lower}S(K)\ls\alpha_{n,k}^k|K|^{\frac{k}{n}}\max_{H\in G_{n,n-k}}S(K\cap H)\end{equation}
for every centrally symmetric convex body $K$ in ${\mathbb R}^n$, where $G_{n,s}$ is the Grassmann
manifold of all $s$-dimensional subspaces of ${\mathbb R}^n$. Moreover, one may replace surface area by any other quermassintegral
and pose the corresponding question (see Section~2 for definitions and background information).

\smallskip

{\it The slicing problem and its variants.} In order to put our question into context we start by recalling
the classical Busemann-Petty problem \cite{BP}: Let $K$ and $D$ be centrally symmetric convex bodies in ${\mathbb R}^n$ and assume that
$|K\cap \xi^\perp |\ls |D\cap\xi^\perp |$ for every $\xi\in S^{n-1}.$ Is it then true that $|K|\ls |D|?$ It is known that the answer
is affirmative if $n\ls 4$ and negative if $n\gr 5;$ see
\cite{Gardner-book} and \cite{Koldobsky-book} for the history and the solution of the problem.
An isomorphic version of the Busemann-Petty problem was introduced in \cite{MP}. Does there exist an absolute constant $C_1$
so that for any dimension $n$ and any pair of centrally symmetric convex bodies $K$ and $D$ in ${\mathbb R}^n$ satisfying
$|K\cap \xi^\perp|\ls |D\cap \xi^\perp|$ for all $\xi\in S^{n-1}$
we have that $ |K|\ls C_1 |D| ?$ The isomorphic Busemann-Petty problem is equivalent to the slicing problem
which asks if there exists an absolute constant $C_2>0$ such that for every $n\gr 2$ and every
convex body $K$ in ${\mathbb R}^n$ with barycenter at the origin (we call these convex bodies centered)
one has
\begin{equation*}|K|^{\frac{n-1}{n}}\ls C_2\,\max_{\xi\in S^{n-1}}\,|K\cap \xi^{\perp }|.\end{equation*}
It is well-known that this problem is equivalent to the question if there exists an absolute constant $C_3>0$ such that
\begin{equation*}L_n:= \max\{ L_K:K\ \hbox{is isotropic in}\ {\mathbb R}^n\}\ls C_3\end{equation*}
for all $n\gr 1$, where $L_K$ is the isotropic constant of $K$. Bourgain proved in \cite{Bourgain-1991} that $L_n\ls c_1\sqrt[4]{n}\log\! n$, and Klartag \cite{Klartag-2006} improved this bound to $L_n\ls c_2\sqrt[4]{n}$. A breakthrough on this problem has been recently announced by Y.~Chen \cite{YChen}; from his results it follows that $L_n\ls C\cdot\exp (c\sqrt{\log n}\cdot\sqrt{\log (\log n)})=o(n^{\epsilon })$ for any $\epsilon >0$ as the dimension $n$ grows to infinity. From the equivalence of the two questions it follows that
\begin{equation*}|K|^{\frac{n-1}{n}}\ls c_3L_n\,\max_{\xi\in S^{n-1}}\,|K\cap \xi^{\perp }|\end{equation*}
for every centered convex body $K$ in ${\mathbb R}^n$. The lower dimensional slicing problem can be posed in the
following way: Let $1\ls k\ls n-1$ and let $\alpha_{n,k}$
be the smallest positive constant $\alpha >0$ such that, for every centered convex body $K$ in ${\mathbb R}^n$,
\begin{equation*}|K|^{\frac{n-k}{n}}\ls \alpha^k\max_{H\in G_{n,n-k}}|K\cap H|.\end{equation*}
Then the question is if there exists an absolute constant $C_4>0$ such that $\alpha_{n,k}\ls C_4$ for all $n$ and $k$.

The slicing problem can be posed for a general measure in place of volume. Let $g$ be a locally integrable non-negative function on ${\mathbb R}^n$.
For every Borel subset $B\subseteq {\mathbb R}^n$ we define
\begin{equation*}\mu (B)=\int_Bg(x)dx,\end{equation*}
where, if $B\subseteq H$ for some subspace $H\in G_{n,s}$, $1\ls s\ls n-1$, integration is understood with respect to the $s$-dimensional Lebesgue
measure on $H$. Then, for any $1\ls k\ls n-1$ one may define $\alpha_{n,k}(\mu )$ as the smallest constant $\alpha >0$ with the following property: For
every centered convex body $K$ in ${\mathbb R}^n$ one has
\begin{equation*}\mu (K)\ls \alpha^k\,|K|^{\frac{k}{n}}\,\max_{H\in G_{n,n-k}}\mu (K\cap H).\end{equation*}
Koldobsky proved in \cite{Koldobsky-Advances-2014} that if $K$ is a centrally symmetric convex body in ${\mathbb R}^n$ and if $g$ is
even and continuous on $K$ then
\begin{equation*}\mu (K)\ls \gamma_{n,1}\frac{n}{n-1}\sqrt{n}\,|K|^{\frac{1}{n}}\,\max_{\xi\in S^{n-1}}\mu (K\cap \xi^{\perp }),\end{equation*}
where, more generally, $\gamma_{n,k}=|B_2^n|^{\frac{n-k}{n}}/|B_2^{n-k}|<1$ for all $1\ls k\ls n-1$.
In \cite{Koldobsky-GAFA-2014}, Koldobsky obtained estimates for the lower dimensional sections: if $K$ is a centrally symmetric convex body in
${\mathbb R}^n$ and $g$ is even and continuous on $K$ then
\begin{equation*}\mu (K)\ls \gamma_{n,k}\frac{n}{n-k}(\sqrt{n})^k\,|K|^{\frac{k}{n}}\,\max_{H\in G_{n,n-k}}\mu (K\cap H)\end{equation*}
for every $1\ls k\ls n-1$. A different proof of this fact was given in \cite{CGL}: the method in this work allows one to drop the symmetry and continuity assumptions: Let $K$ be a convex body in ${\mathbb R}^n$ with $0\in {\rm int}(K)$. Let $g$ be a bounded
non-negative measurable function on ${\mathbb R}^n$ and let $\mu $ be the measure on ${\mathbb R}^n$ with density $g$. For every $1\ls k\ls n-1$,
\begin{equation*}\mu (K)\ls \left (c_4\sqrt{n-k}\right )^k\,|K|^{\frac{k}{n}}\,\max_{H\in G_{n,n-k}}\mu (K\cap H),\end{equation*}
In fact, the proof leads to the stronger estimate
\begin{equation*}\mu (K)\ls \left (c_5\sqrt{n-k}\right )^k\,|K|^{\frac{k}{n}}\left (\int_{G_{n,n-k}}\mu (K\cap H)^n\,d\nu_{n,n-k}(H)\right )^{\frac{1}{n}} .\end{equation*}
In this work we study the slicing problem for the surface area and other quermassintegrals of convex bodies.
In Section~3 we recall related results regarding the surface area of projections of convex bodies.
However, the natural generalization of the slicing problem for sections that we stated
in the beginning of this introduction has not been studied. As far as we know there are no general inequalities comparing
the surface area $S(K)$ of a convex body $K$ in ${\mathbb R}^n$ to the average or maximal surface area of its
hyperplane or lower dimensional sections.

\smallskip

{\it Main results.} Our first main result states that it is not possible to have an inequality such as \eqref{eq:slicing-surface-1}.

\begin{theorem}\label{th:negative-1}For any $n\gr 2$ one has that
\begin{equation*}\sup\Big\{\frac{S(K)}{|K|^{\frac{1}{n}}\max\limits_{\xi\in S^{n-1}}S(K\cap\xi^{\perp })}:K\;\mathrm{is \;a \;
centrally\;symmetric\;convex\; body\; in}\;{\mathbb R}^n\Big\}=+\infty .\end{equation*}
\end{theorem}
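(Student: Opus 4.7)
\medskip
\noindent\textbf{Proof plan.} My plan is, for each fixed $n\gr 2$, to produce an explicit one-parameter family of centrally symmetric convex bodies $K_\epsilon\subset\mathbb{R}^n$ along which the ratio
\[\frac{S(K_\epsilon)}{|K_\epsilon|^{1/n}\max_{\xi\in S^{n-1}}S(K_\epsilon\cap\xi^\perp )}\]
tends to $+\infty$. The natural candidate is the thin rectangular slab
\[K_\epsilon=[-1,1]^{n-1}\times [-\epsilon,\epsilon],\qquad 0<\epsilon\ls 1.\]
Heuristically, $|K_\epsilon|$ shrinks linearly in $\epsilon$, while $S(K_\epsilon)$ stays bounded below by the measure of the two large facets and, crucially, the surface areas of the hyperplane sections remain bounded above uniformly in $\epsilon$.

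I would proceed in three short steps. First, direct computation gives $|K_\epsilon|=2^n\epsilon$ and
\[S(K_\epsilon)=2\cdot 2^{n-1}+2(n-1)\cdot 2^{n-1}\epsilon \gr 2^n,\]
the two large $(n-1)$-dimensional facets alone accounting for the lower bound. Second, I would use the standard monotonicity of surface area under inclusion for convex bodies in a common Euclidean space: if $A\subseteq B$ are convex bodies then $S(A)\ls S(B)$. One clean justification is that the nearest-point projection $\pi_A\colon \mathbb{R}^n\to A$ is $1$-Lipschitz and its restriction to $\partial B$ surjects onto $\partial A$ (given $a\in\partial A$, pick an outward normal ray at $a$; it must meet $\partial B$ at some $x$ with $\pi_A(x)=a$), so $\mathcal{H}^{n-1}(\partial A)\ls \mathcal{H}^{n-1}(\partial B)$. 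Alternatively one can invoke the monotonicity of mixed volumes in $S(K)=nV(K,\ldots,K,B_2^n)$. Applying this inside each $\xi^\perp\cong\mathbb{R}^{n-1}$ to the inclusion $K_\epsilon\cap\xi^\perp\subseteq [-1,1]^n\cap\xi^\perp$ yields
\[\max_{\xi\in S^{n-1}}S(K_\epsilon\cap\xi^\perp)\ls M(n):=\max_{\xi\in S^{n-1}}S\big([-1,1]^n\cap\xi^\perp\big)<+\infty,\]
the finiteness of $M(n)$ following from continuity (in the Hausdorff metric) of $\xi\mapsto [-1,1]^n\cap\xi^\perp$ and compactness of $S^{n-1}$. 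Third, combining these bounds,
\[\frac{S(K_\epsilon)}{|K_\epsilon|^{1/n}\max_{\xi}S(K_\epsilon\cap\xi^\perp)}\gr \frac{2^n}{2\,M(n)\,\epsilon^{1/n}}\longrightarrow +\infty\qquad (\epsilon\to 0^+),\]
which would establish the theorem.

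The whole argument is elementary once the family is chosen; there is no serious obstacle. The only step that requires a moment of thought is the uniform-in-$\epsilon$ upper bound on $\max_{\xi}S(K_\epsilon\cap\xi^\perp)$, which the monotonicity lemma settles cleanly. The particular shape of the slab is cosmetic: one could equally well take $B_2^{n-1}\times [-\epsilon,\epsilon]$, or any other ``flat'' symmetric body with one direction of small width, and the same three-step computation would work.
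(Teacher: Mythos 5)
Your proof is correct, and it is genuinely different from (and substantially more elementary than) the argument in the paper. The paper also constructs thin "pancake" bodies, but works with ellipsoids $\mathcal{E}$ having $a_2=\cdots=a_n=r$ and $a_1=r^{-(n-1)}$ (so $|\mathcal{E}|$ is normalized). To control $\max_\xi S(\mathcal{E}\cap\xi^\perp)$ it first identifies the extremal hyperplane section of an ellipsoid with respect to each quermassintegral (Theorem~\ref{th:max-surface-ellipsoid-section}, proved via the Cauchy interlacing theorem), and then evaluates $S(\mathcal{E})$ and $S(\mathcal{E}\cap e_1^\perp)$ via Rivin's Gaussian-average formula \eqref{eq:rivin}, comparing the two by Khintchine's inequality. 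That machinery is what lets the paper push to the stronger Theorems~\ref{th:negative-1b} and \ref{th:negative-general} covering all quermassintegrals and projections, and the ellipsoidal section lemma is of independent interest. Your slab $K_\epsilon=[-1,1]^{n-1}\times[-\epsilon,\epsilon]$ bypasses all of this: the key observation is simply that $K_\epsilon\cap\xi^\perp\subseteq[-1,1]^n\cap\xi^\perp$, so by monotonicity of surface area under inclusion the denominator $\max_\xi S(K_\epsilon\cap\xi^\perp)$ stays uniformly bounded while $|K_\epsilon|^{1/n}\to 0$ and $S(K_\epsilon)\gr 2^n$. For the uniform bound $M(n)<\infty$ you do not even need the compactness-and-continuity argument you sketch: every section $[-1,1]^n\cap\xi^\perp$ lies in a Euclidean ball of radius $\sqrt{n}$ inside $\xi^\perp$, so monotonicity gives $M(n)\ls (n-1)\omega_{n-1}n^{(n-2)/2}$ directly. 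In short, your argument suffices for Theorem~\ref{th:negative-1} (and, with the same inclusion applied to $P_{\xi^\perp}(K_\epsilon)\subseteq P_{\xi^\perp}([-1,1]^n)$, it even covers the projection version of Theorem~\ref{th:negative-1b}); the paper's route earns its extra complexity by proving the considerably more general Theorem~\ref{th:negative-general}.
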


For the proof of Theorem~\ref{th:negative-1} we show that for any $\alpha >0$ one may construct
a centrally symmetric ellipsoid ${\cal E}$ such that
$$S({\cal E})>\alpha |{\cal E}|^{\frac{1}{n}}\max_{\xi\in S^{n-1}}S({\cal E}\cap\xi^{\perp }).$$
In order to do this, for a given ellipsoid ${\cal E}$ in ${\mathbb R}^n$ we need to know the $(n-1)$-dimensional
section of ${\cal E}$ that has the largest surface area. This is a natural question of independent interest, which
we answer in Section~4. We show that if ${\cal E}$ is an origin symmetric ellipsoid in ${\mathbb R}^n$, and if
$a_1\ls a_2\ls\cdots \ls a_n$ are the lengths and $e_1,e_2,\ldots ,e_n$ are the corresponding directions of its semi-axes, then
$$S({\cal E}\cap \xi^{\perp })\ls S({\cal E}\cap e_1^{\perp })$$
for every $\xi\in S^{n-1}$. Then, we combine this information with a formula of Rivin \cite{Rivin-2007}
for the surface area of an ellipsoid: If ${\cal E}$ is an ellipsoid in
${\mathbb R}^n$ with semi-axes $a_1\ls a_2\ls\cdots \ls a_n$ in the directions of $e_1,\ldots ,e_n$ then
\begin{equation}\label{eq:rivin}S({\cal E})=n\,|{\cal E}|\,\int_{S^{n-1}}\Big (\sum_{i=1}^n\frac{\xi_i^2}{a_i^2}\Big)^{1/2}d\sigma (\xi ).\end{equation}

In fact, as we will see, for any $k-$dimensional subspace $H$ and any $0\ls j\ls k-1$ we have that
$$W_j({\cal E}\cap F_k)\ls W_j({\cal E}\cap H)\ls W_j({\cal E}\cap E_k)$$
and
$$W_j(P_{F_k}({\cal E}))\ls W_j(P_H({\cal E}))\ls W_j(P_{E_k}({\cal E})),$$
where $F_k=\mathrm{span}\{e_1,\ldots ,e_k\}$, $E_k=\mathrm{span}\{e_{n-k+1},\ldots, e_n\}$ and $W_j$ denotes the $j$-th quermassintegral of a convex body
(see Section~2 for the necessary definitions). These results are the analogues of a known fact for the maximal and minimal
volume of $k$-dimensional sections and projections of ellipsoids (see Section~4 for further details and references).
As a consequence we obtain a more general negative result about all the quermassintegrals of sections and projections of convex bodies.

\begin{theorem}\label{th:negative-2}Let $n\gr 3$, $1\ls k\ls n-2$ and $1\ls j\ls n-k-1$. Then,
$$\sup\left\{\frac{W_j(K)}{|K|^{\frac{k}{n}}\,\max\limits_{H\in G_{n,n-k}}W_j(K\cap H)}:K\;\hbox{is a convex body in}\;{\mathbb R}^n\right\}=+\infty .$$
In fact, we also have that
$$\sup\left\{\frac{W_j(K)}{|K|^{\frac{k}{n}}\,\max\limits_{H\in G_{n,n-k}}W_j(P_H(K))}:K\;\hbox{is a convex body in}\;{\mathbb R}^n\right\}=+\infty .$$
\end{theorem}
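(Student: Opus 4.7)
The plan is to test both ratios on a single one-parameter family of flattened ellipsoids. Fix $n\gr 3$, $1\ls k\ls n-2$ and $1\ls j\ls n-k-1$, and for $\varepsilon\in (0,1)$ let ${\cal E}_\varepsilon$ be the centrally symmetric ellipsoid in ${\mathbb R}^n$ with semi-axes $\varepsilon ,1,\ldots ,1$ along the standard basis $e_1,\ldots ,e_n$. Then $|{\cal E}_\varepsilon|=\varepsilon\omega_n\to 0$ as $\varepsilon\to 0^{+}$, and the blow-up of the ratios will be driven entirely by this volume factor.

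To bound the denominator I would invoke the monotonicity principle for quermassintegrals of sections and projections of ellipsoids stated in the introduction, applied with their $k$ equal to our $n-k$ (the dimensional restriction $j\ls (n-k)-1$ matches our hypothesis). For every $H\in G_{n,n-k}$,
\[W_j({\cal E}_\varepsilon\cap H)\ls W_j({\cal E}_\varepsilon\cap E_{n-k}),\qquad W_j(P_H({\cal E}_\varepsilon))\ls W_j(P_{E_{n-k}}({\cal E}_\varepsilon)),\]
where $E_{n-k}=\mathrm{span}\{e_{k+1},\ldots ,e_n\}$. Because the semi-axes of ${\cal E}_\varepsilon$ indexed by $e_{k+1},\ldots ,e_n$ all equal $1$, both ${\cal E}_\varepsilon\cap E_{n-k}$ and $P_{E_{n-k}}({\cal E}_\varepsilon)$ coincide with the Euclidean unit ball $B_2^{n-k}$ of $E_{n-k}$. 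Consequently
\[\max_{H\in G_{n,n-k}}W_j({\cal E}_\varepsilon\cap H)=\max_{H\in G_{n,n-k}}W_j(P_H({\cal E}_\varepsilon))=W_j(B_2^{n-k}),\]
a positive constant depending only on $n,k,j$ and independent of $\varepsilon$.

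For the numerator I would use the set-inclusion monotonicity of $W_j$ (a consequence of Kubota's integral representation). The equatorial disk $D_0=\{0\}\times B_2^{n-1}\subset {\mathbb R}^n$ lies in ${\cal E}_\varepsilon$ for every $\varepsilon$, so $W_j({\cal E}_\varepsilon)\gr W_j(D_0)$. Since $D_0$ is genuinely $(n-1)$-dimensional and $1\ls j\ls n-1$, Kubota's formula expresses $W_j(D_0)$ as a positive multiple of $\int_{G_{n,n-j}}|P_H(D_0)|\,d\nu (H)$; for almost every $H$ the projection $P_H(D_0)$ has positive $(n-j)$-dimensional volume (as $n-j\ls n-1=\dim D_0$), so $W_j(D_0)=:c(n,j)>0$. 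Combining,
\[\frac{W_j({\cal E}_\varepsilon)}{|{\cal E}_\varepsilon|^{k/n}\max\limits_{H\in G_{n,n-k}}W_j({\cal E}_\varepsilon\cap H)}\gr \frac{c(n,j)}{(\varepsilon\omega_n)^{k/n}\,W_j(B_2^{n-k})}\longrightarrow +\infty\]
as $\varepsilon\to 0^{+}$, and the identical estimate settles the projection version.

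The main obstacle is not this reduction but the auxiliary monotonicity lemma for quermassintegrals of ellipsoid sections and projections that it invokes; proving it, presumably by extending Rivin's formula \eqref{eq:rivin} to all quermassintegrals and/or by a Steiner-symmetrization argument adapted to $W_j$, is where the real work lies. Granted that lemma, Theorem~\ref{th:negative-2} follows at once from the $\varepsilon^{k/n}$ decay of the volume factor.
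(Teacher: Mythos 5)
Your proof is correct, and it uses essentially the same test family as the paper (your ${\cal E}_\varepsilon$ with semi-axes $(\varepsilon,1,\ldots,1)$ is, up to a permutation of coordinates and a scaling by $\varepsilon^{-1/n}$ that leaves the scale-invariant ratio unchanged, the paper's ${\cal E}_{r,s}$ with $r=\varepsilon^{-1/n}$ and $s=r^{-(n-1)}$). The denominator estimate is identical to the paper's: both max section and max projection over $G_{n,n-k}$ reduce to $W_j$ of an $(n-k)$-ball via the ellipsoid extremal lemma. Where your route genuinely diverges is the numerator. The paper invokes the explicit formula \eqref{eq:tatarko} of Myroshnychenko--Tatarko--Yaskin for the quermassintegrals of an ellipsoid of revolution, normalizes the volume to $\omega_n$, and shows the numerator grows like $r^{n-j}$ against a denominator of order $r^{n-k-j}$, getting a contradiction from $r^k\to\infty$. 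You instead observe that the $(n-1)$-disk $D_0=\{0\}\times B_2^{n-1}$ sits inside every ${\cal E}_\varepsilon$, so by monotonicity of mixed volumes $W_j({\cal E}_\varepsilon)\gr W_j(D_0)>0$ is bounded below by a positive constant, and the blow-up is driven entirely by $|{\cal E}_\varepsilon|^{k/n}\to 0$. This is more elementary and avoids the special formula for ellipsoids of revolution altogether, which is a genuine simplification; the paper's version, in exchange, gives a quantitative rate (the ratio grows like $r^k$). One small aside: the auxiliary lemma you flag as ``the real work'' is the paper's Theorem~\ref{th:max-surface-ellipsoid-section}, and it is proved there not via Rivin's formula or Steiner symmetrization but via Cauchy's interlacing theorem for eigenvalues, combined with a containment argument and a polarity/duality step for the projection half.
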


In Section~5 we provide some estimates, in the positive direction, for the surface area version of the slicing problem.
However, they depend on the parameter
$$t(K)=\left(\frac{|K|}{|r(K)B^n_2|}\right)^{\frac{1}{n}}$$
where $r(K)$ is the inradius of $K$, i.e. the largest value of $r>0$ for which there exists $x_0\in K$ such that $x_0+rB_2^n\subseteq K$.
More precisely, we show:

\begin{theorem}\label{th:positive-1}
Let $K$ be a convex body in $\mathbb{R}^n$ with barycenter at $0$. Then,
$$S(K)\ls \frac{d_n}{d_{n-k}}(c_1L_K)^{\frac{k(n-k-1)}{n-k}}t(K)|K|^{\frac{k}{n}}\max_{H\in G_{n,n-k}}S(K\cap H)$$
where $d_s=s\omega_s^{1/s}$ and $c_1>0$ is an absolute constant.
\end{theorem}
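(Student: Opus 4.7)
The plan is to combine three independent tools: (i) a simple isoperimetric-type inequality that controls $S(K)$ in terms of $|K|/r(K)$; (ii) the classical isoperimetric inequality applied inside each $(n-k)$-dimensional section; and (iii) the lower-dimensional slicing inequality with the $L_K$-factor. The key observation is that the surface-area version splits cleanly as $S(K)\ls d_n t(K)|K|^{(n-1)/n}$, after which only the $|K|^{(n-k-1)/n}$-piece has to be compared to a section-surface-area.

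First I would note that for any convex body $K$ containing a ball $x_0+r(K)B_2^n$, one has $\langle x-x_0,\nu(x)\rangle\gr r(K)$ for every $x\in\partial K$ (the outer normal half-space at $x$ contains the inscribed ball). Integrating this against the divergence-theorem identity $|K|=\tfrac{1}{n}\int_{\partial K}\langle x-x_0,\nu(x)\rangle\,d\sigma(x)$ gives $S(K)\ls n|K|/r(K)$. Writing $r(K)=|K|^{1/n}/(\omega_n^{1/n}t(K))$ from the very definition of $t(K)$, this rewrites as
\begin{equation*}
S(K)\ls d_n\,t(K)\,|K|^{\frac{n-1}{n}}.
\end{equation*}

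Next, for every $H\in G_{n,n-k}$ the body $K\cap H$ is convex in an $(n-k)$-dimensional space, so the classical isoperimetric inequality gives $S(K\cap H)\gr d_{n-k}|K\cap H|^{(n-k-1)/(n-k)}$. Maximizing over $H$ and inverting,
\begin{equation*}
\max_{H\in G_{n,n-k}}|K\cap H|\ls d_{n-k}^{-\frac{n-k}{n-k-1}}\Big(\max_{H\in G_{n,n-k}}S(K\cap H)\Big)^{\frac{n-k}{n-k-1}}.
\end{equation*}
I would then invoke the lower-dimensional isomorphic slicing inequality $|K|^{(n-k)/n}\ls(c_1 L_K)^k\max_{H\in G_{n,n-k}}|K\cap H|$, valid for any centered convex body $K$, to convert the right-hand side of the previous display into a lower bound for $|K|^{(n-k-1)/n}$ in terms of $\max_H S(K\cap H)$; explicitly,
\begin{equation*}
|K|^{\frac{n-k-1}{n}}\ls \frac{(c_1L_K)^{\frac{k(n-k-1)}{n-k}}}{d_{n-k}}\,\max_{H\in G_{n,n-k}}S(K\cap H).
\end{equation*}

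Finally, splitting $|K|^{(n-1)/n}=|K|^{k/n}\cdot|K|^{(n-k-1)/n}$ in the first display and substituting the last display yields the claimed inequality, with the exact constants $d_n/d_{n-k}$ and exponent $k(n-k-1)/(n-k)$ on $c_1L_K$. I do not expect a genuine obstacle: the three ingredients are already in the literature, and the only care needed is to make sure the chain of inequalities lines up so that the powers of $|K|$ and $L_K$ match the statement; the exponent $k(n-k-1)/(n-k)$ precisely arises from raising the slicing bound to the power $(n-k-1)/(n-k)$ produced by the $(n-k)$-dimensional isoperimetric inequality.
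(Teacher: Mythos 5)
Your proposal is correct and is essentially the paper's argument specialized to the case $j=1$: the paper proves the more general Theorem~5.2 for all quermassintegrals $W_j$, and for $j=1$ their Aleksandrov inequality $|K\cap H|^{(n-k-1)/(n-k)}\ls \omega_{n-k}^{-1/(n-k)}W_1(K\cap H)$ is precisely the classical isoperimetric inequality you invoke, while their mixed-volume step $W_1(K)\ls|K|/r(K)$ is your divergence-theorem bound $S(K)\ls n|K|/r(K)$, and the $L_K$-slicing bound from Dafnis--Paouris is identical in both. The only cosmetic difference is that you justify $S(K)\ls n|K|/r(K)$ via $\langle x-x_0,\nu(x)\rangle\gr r(K)$ and the divergence theorem instead of via monotonicity of mixed volumes.
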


Note that in the case $k=1$ (the hyperplane case) we have $\frac{d_n}{d_{n-1}}(c_1L_K)^{\frac{n-2}{n-1}}\approx L_K$. We also provide a variant
of Theorem~\ref{th:positive-1} for the ratio $\frac{S(K)}{|K|}$.

\begin{theorem}\label{th:positive-2}
Let $K$ be a convex body in $\mathbb{R}^n$ with $0\in {\rm int}(K)$. Then, for all $1\ls k\ls n-1$ we have that
$$\frac{S(K)}{|K|}\ls\frac{n}{n-k}t(K)\max_{H\in G_{n,n-k}}\frac{S(K\cap H)}{|K\cap H|}.$$
\end{theorem}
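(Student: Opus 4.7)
The plan is to bound both sides in terms of the inradius $r(K)$ and the volume radius $r_v(K):=(|K|/\omega_n)^{1/n}$, noting that $t(K)=r_v(K)/r(K)$. The proof then consists of three essentially independent estimates that chain together.

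\emph{Step 1} is the elementary upper bound $S(K)/|K|\ls n/r(K)$. The divergence theorem applied to the position vector gives $n|K|=\int_{\partial K}\langle x,n_K(x)\rangle\,d\mathcal{H}^{n-1}(x)$, and since $\int_{\partial K}n_K\,d\mathcal{H}^{n-1}=0$ (apply divergence to each constant vector field), this integral is unchanged if we replace $x$ by $x-x_0$, where $x_0$ is the center of an inscribed ball of radius $r(K)$. The integrand then equals $h_{K-x_0}(n_K(x))\gr r(K)$ on $\partial K$, so $n|K|\gr r(K)S(K)$.

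\emph{Step 2} is the only substantive step: I want to exhibit $H_0\in G_{n,n-k}$ with $|K\cap H_0|\ls \omega_{n-k}r_v(K)^{n-k}$. Polar integration from $0$ gives $|K|=\tfrac{1}{n}\int_{S^{n-1}}\rho_K^n\,d\sigma$ and $|K\cap H|=\tfrac{1}{n-k}\int_{S^{n-1}\cap H}\rho_K^{n-k}\,d\sigma_H$, which combined with the bundle identity $\int_{G_{n,n-k}}\int_{S^{n-1}\cap H}f\,d\sigma_H\,d\nu(H)=\tfrac{(n-k)\omega_{n-k}}{n\omega_n}\int_{S^{n-1}}f\,d\sigma$ (verified by taking $f\equiv 1$) gives
\[\int_{G_{n,n-k}}|K\cap H|\,d\nu(H)=\frac{\omega_{n-k}}{n\omega_n}\int_{S^{n-1}}\rho_K^{n-k}\,d\sigma.\]
The power-mean inequality $\bigl(\int_{S^{n-1}}\rho_K^{n-k}\,d\sigma/(n\omega_n)\bigr)^{1/(n-k)}\ls\bigl(\int_{S^{n-1}}\rho_K^n\,d\sigma/(n\omega_n)\bigr)^{1/n}=r_v(K)$ bounds the right-hand side by $\omega_{n-k}r_v(K)^{n-k}$, and any subspace $H_0$ achieving at most the mean will do.

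\emph{Step 3} combines everything. By the classical isoperimetric inequality applied inside the $(n-k)$-dimensional subspace $H_0$ and Step~2,
\[\frac{S(K\cap H_0)}{|K\cap H_0|}\gr \frac{(n-k)\omega_{n-k}^{1/(n-k)}}{|K\cap H_0|^{1/(n-k)}}\gr\frac{n-k}{r_v(K)}.\]
Invoking Step~1 and the identity $t(K)r(K)=r_v(K)$ then yields
\[\frac{n}{n-k}\,t(K)\,\max_{H\in G_{n,n-k}}\frac{S(K\cap H)}{|K\cap H|}\gr\frac{n\,t(K)}{r_v(K)}=\frac{n}{r(K)}\gr\frac{S(K)}{|K|},\]
which is the claim (with equality throughout when $K$ is a Euclidean ball). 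The main obstacle is isolating the averaging inequality $\int_{G_{n,n-k}}|K\cap H|\,d\nu(H)\ls\omega_{n-k}r_v(K)^{n-k}$ as the precise bridge between the divergence-based upper bound on $S(K)/|K|$ and the lower-dimensional isoperimetric bound; the rest of the argument is a routine chaining.
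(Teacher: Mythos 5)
Your proof is correct, and its overall architecture is the same as the paper's: the elementary bound $S(K)/|K|\ls n/r(K)$, a small $(n-k)$-dimensional central section, and the isoperimetric inequality inside that section (the paper states these in the language of $W_j(K)\ls |K|/r(K)^j$ and Aleksandrov's inequalities for general $j$, then specializes). The one genuine difference lies in Step~2. To produce a subspace $H_0$ with $|K\cap H_0|\ls\omega_{n-k}\,r_v(K)^{n-k}$, the paper invokes the Busemann--Straus/Grinberg inequality
\[
\int_{G_{n,n-k}}|K\cap H|^n\,d\nu_{n,n-k}(H)\ls\frac{\omega_{n-k}^{n}}{\omega_n^{n-k}}\,|K|^{n-k},
\]
which is a nontrivial theorem about dual affine quermassintegrals, whereas you obtain the same conclusion from the first moment $\int_{G_{n,n-k}}|K\cap H|\,d\nu(H)$ by a bare-hands polar decomposition on the sphere together with the power-mean inequality. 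This is a real simplification: the linear-in-$|K\cap H|$ bound suffices here (and would suffice for the full $W_j$ statement of the paper too), so the deep Grinberg inequality can be bypassed entirely. What the paper's route buys is uniformity with its other results in the same section (Theorem~5.1.1, where the $n$-th-moment version of Dafnis--Paouris genuinely is needed); your route buys a shorter and more self-contained proof of this particular theorem. Two minor notational remarks: your polar formula $|K|=\tfrac1n\int_{S^{n-1}}\rho_K^n\,d\sigma$ and the bundle identity are written for the \emph{unnormalized} spherical measure, whereas the paper reserves $\sigma$ for the rotation-invariant \emph{probability} measure; and "verified by taking $f\equiv1$" pins down only the constant in the bundle identity --- the identity itself is the standard uniqueness-of-invariant-measure argument, which is worth stating explicitly.
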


Our proof of these results involves the Grinberg/Busemann-Straus inequality, an estimate for the dual affine quermassintegrals of a
convex body, and the classical Aleksandrov inequalities. In fact, the same more or less argument leads to similar
results for any quermassintegral and not only for surface area (the precise statements are given in Section~5).

Our starting point in Section~6 are two simple inequalities relating the surface area of a convex body $K$ to its volume.
One has
\begin{equation*}r(K)S(K)\ls n|K|\ls R(K)S(K),\end{equation*}
where $r(K)$ and $R(K)$ denote the inradius and the circumradius of $K$ respectively. In the case of an ellipsoid, we observe that
\eqref{eq:rivin}, the formula which is used for the proof of Theorem~\ref{th:negative-1}, can be rewritten as
$$S({\cal E})=n|{\cal E}|M_2({\cal E}),$$
where $M_2^2({\cal E})=\int_{S^{n-1}}\|\xi\|_{{\cal E}}^2d\sigma (\xi )$ (and $\|\xi\|_K$ denotes the Minkowski
functional of a convex body $K$ with $0\in {\rm int}(K)$). Using the fact that
$M_2({\cal E})\approx M({\cal E})=\int_{S^{n-1}}\|\xi\|_{{\cal E}}d\sigma (\xi )$, we get
$$S({\cal E})\approx n|{\cal E}|M({\cal E}).$$
Note that $r(K)\ls M(K)^{-1}\ls w(K)\ls R(K)$, where $w(K)$ is the mean width of $K$.
Therefore, for a convex body $K\subset\mathbb{R}^n$, we naturally introduce the parameters
$$p(K)=\frac{S(K)}{|K|M(K)}\quad\hbox{and}\quad q(K)=\frac{w(K)S(K)}{|K|}$$ and we ask for upper and lower bounds for them.
Theorems \ref{th:2.1} and \ref{parameter_lower} show that there are absolute constants $c_1, c_2>0$ such that
for every convex body $K\in\mathbb{R}^n$ we have
$$c_1\sqrt{n}\ls p(K)\ls c_2n^{3/2}.$$
Moreover, the order of $n$ cannot be improved in both the upper and the lower bound. However, we prove that if $K$ is in
a classical position, such as John's position or the minimal surface area position or the isotropic position, then
these estimates can be improved. The situation is different with $q(K)$. We show that $q(K)\gr n$ for every
convex body $K$ in ${\mathbb R}^n$, while in general there can be no upper bound in any fixed dimension: for any
$n\gr 2$ one has $\sup\{q(K):K\;\hbox{is a convex body in}\;{\mathbb R}^n\}=+\infty $.

In Section~7 we study a variant of our main problem. Our starting point is a surface area variant of the equivalence of the
isomorphic Busemann--Petty problem with the slicing problem:
Assuming that there is a constant $\gamma_n$ such that
if $K$ and $D$ are centrally symmetric convex bodies in $\mathbb{R}^n$ that satisfy
$$S(K\cap\xi^{\perp})\ls S(D\cap\xi^{\perp})$$ for all $\xi\in S^{n-1}$, then $S(K)\ls \gamma_n S(D)$,
one can see that there is some constant $c(n)$ such that
\begin{equation}\label{slicing-15}
S(K)\ls c(n)S(K)^{\frac{1}{n-1}}\max_{\xi\in S^{n-1}} S(K\cap\xi^{\perp})
\end{equation}
for every convex body $K$ in ${\mathbb R}^n$. We show that an inequality of this type holds true in general.

\begin{theorem}\label{sec-7}Let $K$ be a convex body in ${\mathbb R}^n$. Then,
$$S(K)\ls A_nS(K)^{\frac{1}{n-1}}\max\limits_{\xi\in S^{n-1}}S(K\cap\xi^{\perp })$$
where $A_n>0$ is a constant depending only on $n$.
\end{theorem}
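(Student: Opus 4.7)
The plan is to establish the equivalent form $S(K)^{(n-2)/(n-1)}\ls A_n\max_{\xi\in S^{n-1}}S(K\cap\xi^{\perp })$ by exhibiting, for every convex body with centroid at the origin, one hyperplane section through the origin whose surface area is bounded below by a dimension-dependent multiple of $S(K)^{(n-2)/(n-1)}$. Since $S(K)$ is translation invariant, I first translate $K$ so that its centroid lies at the origin; this leaves the left-hand side unchanged and makes every section $K\cap\xi^{\perp }$ pass through the centroid.

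The first step is a lower bound on $\max_\xi |K\cap\xi^{\perp }|$ in terms of $S(K)$. For each $\xi\in S^{n-1}$ the function $g_\xi(t):=|K\cap(\xi^{\perp }+t\xi)|$ is log-concave by Brunn--Minkowski, and its one-dimensional centroid vanishes because $K$ is centred. Fradelizi's inequality for one-dimensional log-concave densities with centroid zero then gives $g_\xi(0)\gr e^{-1}\|g_\xi\|_\infty$, while Brunn's inequality gives $\|g_\xi\|_\infty\gr |K|/w_\xi(K)$, where $w_\xi(K)$ denotes the width of $K$ in direction $\xi$. Combined,
\[
|K\cap\xi^{\perp }|\gr \frac{|K|}{e\,w_\xi(K)}\qquad \text{for every }\xi\in S^{n-1}.
\]
Evaluating at a direction of minimal width $w_{\min}(K)$ and using the classical $r(K)\,S(K)\ls n|K|$ together with Steinhagen's inequality $w_{\min}(K)\ls c_n^{\ast}\,r(K)$ (with $c_n^{\ast}$ of polynomial order in $n$), I arrive at
\[
\max_{\xi\in S^{n-1}}|K\cap\xi^{\perp }|\gr \frac{|K|}{e\,w_{\min}(K)}\gr \frac{r(K)\,S(K)}{e\,n\,w_{\min}(K)}\gr \frac{S(K)}{e\,c_n^{\ast}\,n}.
\]

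The second step converts this volume bound into a surface area bound via the classical isoperimetric inequality applied inside the hyperplane $\xi^{\perp }$: for every $\xi$,
\[
S(K\cap\xi^{\perp })\gr (n-1)\,\omega_{n-1}^{1/(n-1)}\,|K\cap\xi^{\perp }|^{(n-2)/(n-1)}.
\]
Taking the maximum over $\xi$ and inserting the bound from the first step gives
\[
\max_{\xi\in S^{n-1}}S(K\cap\xi^{\perp })\gr (n-1)\,\omega_{n-1}^{1/(n-1)}\left(\frac{S(K)}{e\,c_n^{\ast}\,n}\right)^{(n-2)/(n-1)},
\]
which after rearrangement and multiplication by $S(K)^{1/(n-1)}$ is exactly the claim of the theorem, with $A_n=(e\,c_n^{\ast}\,n)^{(n-2)/(n-1)}/((n-1)\,\omega_{n-1}^{1/(n-1)})$.

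The main obstacle is the direction selection. A Cauchy-formula argument readily produces a direction $\xi_0$ with large projection volume $|P_{\xi_0^{\perp }}(K)|$, but projections onto $\xi_0^{\perp }$ dominate sections through $\xi_0^{\perp }$ only \emph{from above} via monotonicity of surface area under inclusion, which is the wrong direction for bounding the section surface area from below. The strategy above instead selects the \emph{narrow} direction of $K$: in the direction of minimal width the ratio $|K|/w_\xi(K)$ is largest, and Fradelizi's inequality guarantees that the centred slice there captures a macroscopic fraction of $|K|$. The dimension-dependent constant $A_n$ is forced by the losses in Fradelizi's and Steinhagen's inequalities (both essentially sharp for the simplex) and cannot be made absolute by this approach.
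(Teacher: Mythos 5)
Your proof is correct, and it takes a genuinely different route from the paper's. The paper first proves the estimate for origin-symmetric ellipsoids, using Rivin's formula for the surface area of an ellipsoid together with the characterization in Theorem~\ref{th:max-surface-ellipsoid-section} of the hyperplane section with maximal surface area, and then passes to a general convex body via John's theorem and the monotonicity of surface area under inclusion. You instead argue directly for an arbitrary centered body, combining Fradelizi's inequality for sections through the centroid, Steinhagen's inequality relating minimal width to inradius, the elementary bound $r(K)S(K)\ls n|K|$, and the $(n-1)$-dimensional isoperimetric inequality inside the minimal-width hyperplane.

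Notably, your route yields a substantially better constant. The John-theorem step in the paper inflates the absolute constant from the ellipsoid case by a factor of order $n^{n-2}$, so the paper's $A_n$ is exponential in $n$; the authors explicitly flag the optimal order of $A_n$ as an open question. Your argument produces $A_n=(e\,c_n^{\ast}\,n)^{(n-2)/(n-1)}/\bigl((n-1)\,\omega_{n-1}^{1/(n-1)}\bigr)$, which with $c_n^{\ast}\approx\sqrt{n}$ from Steinhagen and $\omega_{n-1}^{1/(n-1)}\approx c/\sqrt{n}$ is of order $n$, a significant sharpening of the stated dependence. Your remark explaining why the narrow direction (rather than a Cauchy-formula projection argument) is the right way to pick the section is on point: monotonicity of surface area under inclusion only gives the wrong-sided comparison between sections and projections.

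One small caveat applies to both arguments: the theorem leaves the position of $K$ unspecified, yet the right-hand side changes under translation while the left-hand side does not, so some normalization is implicitly required. You fix the centroid at the origin; the paper's argument implicitly centers the John ellipsoid at the origin. Either is a legitimate reading of the (loosely worded) statement, and your choice is exactly the hypothesis that Fradelizi's inequality needs.
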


We obtain this result for an arbitrary ellipsoid; then, it is not hard to extend it to any convex body, using John's theorem.
The value of the constant $A_n$ that one can obtain in this way is clearly not optimal and it would be interesting to determine its
best possible dependence on the dimension $n$.

\section{Notation and background information}

We work in ${\mathbb R}^n$, which is equipped with the standard inner product $\langle\cdot ,\cdot\rangle $. We denote by $\|\cdot \|_2$
the Euclidean norm, and write $B_2^n$ for the Euclidean unit ball and $S^{n-1}$ for the unit sphere.
Volume is denoted by $|\cdot |$. We write $\omega_n$ for the volume of $B_2^n$ and $\sigma $ for the rotationally invariant probability
measure on $S^{n-1}$. The Grassmann manifold $G_{n,k}$ of all $k$-dimensional subspaces of ${\mathbb R}^n$ is equipped with the Haar probability
measure $\nu_{n,k}$. For every $1\ls k\ls n-1$ and $H\in G_{n,k}$ we write $P_H$ for the orthogonal projection from $\mathbb R^{n}$ onto $H$.

The letters $c,c^{\prime }, c_1, c_2$ etc. denote absolute positive constants which may change from line to line. Whenever we write
$a\approx b$, we mean that there exist absolute constants $c_1,c_2>0$ such that $c_1a\ls b\ls c_2a$.  Also, if $K,D\subseteq \mathbb R^n$
we will write $K\approx D$ if there exist absolute constants $c_1, c_2>0$ such that $ c_{1}K\subseteq D \subseteq c_{2}K$.

\smallskip

A convex body in ${\mathbb R}^n$ is a compact convex subset $K$ of ${\mathbb R}^n$ with non-empty interior. We say that $K$ is
centrally symmetric if $x\in K$ implies that $-x\in K$, and that $K$ is centered if its barycenter $\frac{1}{|K|}\int_Kx\,dx $ is at the origin.
The support function of a convex body $K$ is defined by $h_K(y)=\max \{\langle x,y\rangle :x\in K\}$, and the mean width of $K$ is
\begin{equation*}w(K)=\int_{S^{n-1}}h_K(\xi )\,d\sigma (\xi ). \end{equation*}
The circumradius of $K$ is the quantity $R(K)=\max\{ \|x\|_2:x\in K\}$ i.e. the smallest $R>0$ for which $K\subseteq RB_2^n$.
We write $r(K)$ for the inradius of $K$, the largest $r>0$
for which there exists $x_0\in K$ such that $x_0+rB_2^n\subseteq K$. If $0\in {\rm int}(K)$ then we define the polar body $K^{\circ }$ of $K$ by
\begin{equation*}K^{\circ }:=\{ y\in {\mathbb R}^n: \langle x,y\rangle \ls 1 \;\hbox{for all}\; x\in K\}. \end{equation*}
The volume radius of $K$ is the quantity ${\rm vrad}(K)=\left (|K|/|B_2^n|\right )^{1/n}$.
Integration in polar coordinates shows that if the origin is an interior point of $K$ then the volume radius of $K$ can be expressed as
\begin{equation*}{\rm vrad}(K)=\left (\int_{S^{n-1}}\|\xi\|_K^{-n}\,d\sigma (\xi )\right)^{1/n},\end{equation*}
where $\|x\|_K=\min\{ t\gr 0:x\in tK\}$ is the Minkowski functional of $K$. We also define
\begin{equation*}M(K)=\int_{S^{n-1}}\|\xi\|_K\,d\sigma (\xi ).\end{equation*}
A convex body $K$ in ${\mathbb R}^n$ is called isotropic if it has volume $1$, it is centered and its inertia matrix is a multiple of the identity matrix:
there exists a constant $L_K >0$ such that
\begin{equation*}\int_K\langle x,\xi\rangle^2dx =L_K^2\end{equation*}
for all $\xi\in S^{n-1}$. The constant $L_K$ is the isotropic constant of $K$.

From Minkowski's fundamental theorem we know that if $K_1,\ldots ,K_m$ are non-empty, compact convex
subsets of ${\mathbb R}^n$, then the volume of $t_1K_1+\cdots +t_mK_m$ is a homogeneous polynomial of degree $n$ in
$t_i>0$. That is,
\begin{equation*}|t_1K_1+\cdots +t_mK_m|=\sum_{1\ls i_1,\ldots ,i_n\ls m}
V(K_{i_1},\ldots ,K_{i_n})t_{i_1}\cdots t_{i_n},\end{equation*}
where the coefficients $V(K_{i_1},\ldots ,K_{i_n})$ are chosen to be invariant under permutations of their arguments. The coefficient $V(K_1,\ldots ,K_n)$ is the mixed volume of $K_1,\ldots ,K_n$. In particular, if $K$ and $D$ are two convex bodies in ${\mathbb R}^n$
then the function $|K+tD|$ is a polynomial in $t\in [0,\infty )$:
\begin{equation*}|K+tD|=\sum_{j=0}^n \binom{n}{j} V_{n-j}(K,D)\;t^j,\end{equation*}
where $V_{n-j}(K,D)= V((K,n-j),(D,j))$ is the $j$-th mixed volume of $K$ and $D$ (we use  the notation $(D,j)$ for $D,\ldots ,D$ $j$-times).
If $D=B_2^n$ then we set $W_j(K):=V_{n-j}(K,B_2^n)=V((K, n-j), (B_2^n, j))$; this is the $j$-th quermassintegral of $K$.
Note that
\begin{equation*}V_{n-1}(K,D)={\frac{1}{n}} \lim_{t\to 0^+}{\frac{|K+tD|-|K|}{t}},\end{equation*}
and by the Brunn-Minkowski inequality we see that
\begin{equation*}V_{n-1}(K,D)\gr |K|^{\frac{n-1}{n}}|D|^{\frac{1}{n}}\end{equation*}
for all $K$ and $D$ (this is Minkowski's first inequality). The mixed volume $V_{n-1}(K,D)$ can be expressed as
\begin{equation}\label{eq:not-1}V_{n-1}(K,D)={\frac{1}{n}}\int_{S^{n-1}}h_D(\theta )d\sigma_K(\theta ),\end{equation}
where $\sigma_K$ is the surface area measure of $K$; this is the Borel measure
on $S^{n-1}$ defined by
\begin{equation*}\sigma_K(A)=\lambda (\{x\in {\rm bd}(K):\;{\rm the}\;{\rm outer}
\;{\rm normal}\;{\rm to}\;K\;{\rm at}\;x\;{\rm belongs}\;
{\rm to}\;A\}),\end{equation*}
where $\lambda $ is the Hausdorff measure on ${\rm bd}(K)$. In particular, the surface area $S(K):=\sigma_K(S^{n-1})$ of $K$ satisfies
\begin{equation*}S(K)=nW_1(K).\end{equation*}
Kubota's integral formula expresses the quermassintegral $W_j(K)$ as an average of the volumes of
$(n-j)$-dimensional projections of $K$:
\begin{equation*}W_j(K)=\frac{\omega_n}{\omega_{n-j}}\int_{G_{n,n-j}}
|P_H(K)|d\nu_{n,n-j}(H).\end{equation*}Applying this formula for
$j=n-1$ we see that \begin{equation*}W_{n-1}(K)=\omega_n
w(K).\end{equation*}
It is convenient to work with a normalized variant of $W_{n-j}(K)$. If we set
\begin{equation}\label{eq:aleksandrov-1}Q_k(K)=\left
(\frac{W_{n-k}(K)}{\omega_n}\right )^{\frac{1}{k}}=\left
(\frac{1}{\omega_k}\int_{G_{n,k}}|P_H(K)|\,d\nu_{n,k}(H)\right )^{\frac{1}{k}},\end{equation}
then $k\mapsto Q_k(K)$ is decreasing. This is a consequence of the Aleksandrov-Fenchel inequality (see \cite{Burago-Zalgaller-book} and \cite{Schneider-book}). In particular, for every $1\ls k\ls n-1$ we have
\begin{equation}\label{eq:aleksandrov-2}{\rm vrad}(K)=\left (\frac{|K|}{\omega_n}\right )^{\frac{1}{n}}\ls \left (\frac{1}{\omega_k}\int_{G_{n,k}}|P_H(K)|\,d\nu_{n,k}(H)\right )^{\frac{1}{k}}\ls w(K).\end{equation}
We will also use some estimates for the (normalized) dual affine quermassintegrals. For every convex body $K$
in ${\mathbb R}^n$ and every $1\ls k\ls n-1$ we consider the quantity
$$ \widetilde{\Phi}_{[k]}(K):=\frac{1}{|K|^\frac{n-k}{nk}}\left(\int_{G_{n,k}}|K\cap H^{\perp}|^nd\nu_{n,k}\right)^\frac{1}{kn}.$$
It was proved independently by Busemann and Straus \cite{Busemann-Straus-1960}, and  Grinberg \cite{Grinberg-1990}
that $\tilde{\Phi}_{[k]}(K)\ls\tilde{\Phi}_{[k]}(B_2^n)\ls c_1$,
where $c_1>0$ is an absolute constant. Dafnis and Paouris showed in \cite{Dafnis-Paouris-2012} that if $K$ is a centered convex body
in ${\mathbb R}^n$ then $$\tilde{\Phi}_{[k]}(K)\gr \frac{c_2}{L_K},$$
where $c_2>0$ is an absolute constant and $L_K$ is the isotropic constant of $K$. In particular, assuming that $L_K\ls C$ for
an absolute constant we have that
$\tilde{\Phi}_{[k]}(K)\approx 1$ for every centered convex body $K$ in ${\mathbb R}^n$ and all $1\ls k\ls n-1$.

\smallskip

We refer to the books \cite{Gardner-book} and \cite{Schneider-book} for basic facts from the Brunn-Minkowski theory and to the book
\cite{AGA-book} for basic facts from asymptotic convex geometry. We also refer to \cite{BGVV-book} for more information on isotropic convex bodies.

\section{Surface area of projections}

Related to our work is the article \cite{GKV} of Giannopoulos, Koldobsky and Valettas, which provides general inequalities that compare the surface area $S(K)$ of a convex body $K$ in ${\mathbb R}^n$ to the minimal, average or maximal surface area of its hyperplane or lower dimensional projections. The same questions are also discussed for all the quermassintegrals. Starting from two inequalities of Koldobsky about the surface area of hyperplane projections of projection bodies (see \cite{Koldobsky-2013} and \cite{Koldobsky-2015}) the authors in \cite{GKV}
obtain inequalities for the surface area of hyperplane projections of an arbitrary convex body $K$ in ${\mathbb R}^n$.
Let $\partial_K$ denote the minimal surface area parameter of $K$, defined by
\begin{equation*}\partial_K:=\min\Big\{ S(T(K))/|T(K)|^{\frac{n-1}{n}}:T\in GL(n)\Big\}.\end{equation*}
By the isoperimetric and the reverse isoperimetric inequality (see \cite[Chapter~2]{AGA-book}) it is known that $c_1\sqrt{n}\ls \partial_K\ls c_2n$
for every convex body $K$ in ${\mathbb R}^n$, where $c_1,c_2>0$
are absolute constants, It is proved in \cite{GKV} that there exists an absolute constant $c_3>0$ such that, for every convex body $K$ in ${\mathbb R}^n$,
\begin{equation*}|K|^{\frac{1}{n}}\,\min_{\xi\in S^{n-1}}S(P_{\xi^{\perp }}(K))\ls \frac{2b_n\partial_K}{n\omega_n^{\frac{1}{n}}}\,S(K)\ls\frac{c_3\partial_K}{\sqrt{n}}\,S(K),\end{equation*}
where $b_n=\frac{(n-1)\omega_{n-1}}{n\omega_n^{\frac{n-1}{n}}}\approx 1$. This inequality is sharp e.g. for the Euclidean unit ball. Since  $c_3\partial_K/\sqrt{n}\ls c\sqrt{n}$ for every convex body $K$ in ${\mathbb R}^n$, one has the general upper bound
\begin{equation*}|K|^{\frac{1}{n}}\,\min_{\xi\in S^{n-1}}S(P_{\xi^{\perp }}(K))\ls c_4\sqrt{n}\,S(K).\end{equation*}
In the opposite direction, it is proved in \cite{GKV} that if $K$ is a convex body in ${\mathbb R}^n$ then
\begin{equation*}\int_{S^{n-1}}S(P_{\xi^{\perp }}(K))\,d\sigma (\xi )\gr c_5\,S(K)^{\frac{n-2}{n-1}},\end{equation*}
where $c_5>0$ is an absolute constant. A consequence of this inequality is that if $K$ is in the minimal surface area, minimal mean width, isotropic, John or L\"{o}wner position (see \cite[Chapter~2]{AGA-book}) then
\begin{equation*}|K|^{\frac{1}{n}}\,\int_{S^{n-1}}S(P_{\xi^{\perp }}(K))\,d\sigma (\xi )\gr c_6\,S(K),\end{equation*}
where $c_6>0$ is an absolute constant. In particular,
\begin{equation*}|K|^{\frac{1}{n}}\,\max_{\xi\in S^{n-1}}S(P_{\xi^{\perp }}(K))\gr c_6\,S(K).\end{equation*}
In fact, these inequalities continue to hold as long as
\begin{equation*}S(K)^{\frac{1}{n-1}}\ls c_7|K|^{\frac{1}{n}}\end{equation*}
for an absolute constant $c_7>0$. This is a mild condition which is satisfied not only by the classical positions but also
by all reasonable positions of $K$. It should be noted that the question whether there exists a constant $\alpha_n$ such that
\begin{equation*}S(K)\ls\alpha_n\,|K|^{\frac{1}{n}}\max_{\xi\in S^{n-1}}S(P_{\xi^{\perp }}(K))\end{equation*}
for all convex bodies $K$ in ${\mathbb R}^n$ is left open in \cite{GKV}. As we will see in the next section, it has
a negative answer. In \cite{GKV} the same questions are studied for the quermassintegrals $V_{n-k}(K)=V((K, n-k), (B_2^n,k))$ of a convex body $K$ and the
corresponding quermassintegrals of its hyperplane projections.

\section{Ellipsoids and a negative answer to the problem}

In this section we provide a negative answer to the slicing problem for the surface area.

\begin{theorem}\label{th:negative-1b}For any $n\gr 2$ and any $\alpha >0$ there exists a centrally symmetric convex body $K$
in ${\mathbb R}^n$ such that
\begin{equation*}S(K)>\alpha\,|K|^{\frac{1}{n}}\max_{\xi\in S^{n-1}}S(P_{\xi^{\perp }}(K))\gr \alpha\,|K|^{\frac{1}{n}}\max_{\xi\in S^{n-1}}S(K\cap\xi^{\perp }).\end{equation*}
\end{theorem}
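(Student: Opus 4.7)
The plan is to build the counterexample from a one-parameter family of flat ellipsoids. For $\varepsilon\in(0,1)$ let $\mathcal{E}_\varepsilon$ denote the centrally symmetric ellipsoid in $\mathbb{R}^n$ with semi-axes $a_1=\varepsilon$ and $a_2=\cdots=a_n=1$ along the standard basis $e_1,\ldots,e_n$. I will show that
$$\frac{S(\mathcal{E}_\varepsilon)}{|\mathcal{E}_\varepsilon|^{1/n}\max_{\xi\in S^{n-1}}S(P_{\xi^{\perp}}(\mathcal{E}_\varepsilon))}\longrightarrow +\infty\quad\hbox{as }\varepsilon\to 0^+,$$
and then, for any given $\alpha$, select $\varepsilon$ small enough.

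The right-hand inequality of the theorem is free for every convex body: the inclusion $K\cap\xi^{\perp}\subseteq P_{\xi^{\perp}}(K)$ holds trivially inside the hyperplane $\xi^{\perp}$, and surface area is monotone under set-inclusion for convex bodies, so $S(K\cap\xi^{\perp})\ls S(P_{\xi^{\perp}}(K))$ for each $\xi$. Taking the supremum over $\xi$ reduces the task to establishing the strict left-most inequality for $\mathcal{E}_\varepsilon$.

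To control $\max_{\xi}S(P_{\xi^{\perp}}(\mathcal{E}_\varepsilon))$ I would invoke the ellipsoid extremality result announced in the introduction (and carried out in the remainder of Section~4): for an origin-symmetric ellipsoid with semi-axes $a_1\ls\cdots\ls a_n$ one has
$$W_j(P_H(\mathcal{E}))\ls W_j(P_{e_1^{\perp}}(\mathcal{E}))$$
for every $H\in G_{n,n-1}$ and every $0\ls j\ls n-2$. Specializing to $j=1$, which up to the factor $n-1$ gives $(n-1)$-dimensional surface area, and observing that $P_{e_1^{\perp}}(\mathcal{E}_\varepsilon)=B_2^{n-1}$, we obtain
$$\max_{\xi\in S^{n-1}}S(P_{\xi^{\perp}}(\mathcal{E}_\varepsilon))=S(B_2^{n-1})=(n-1)\omega_{n-1},$$
independent of $\varepsilon$. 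If desired, one can avoid invoking the general statement here: the $O(n-1)$-symmetry of $\mathcal{E}_\varepsilon$ about the $e_1$-axis reduces matters to $\xi=\cos\theta\,e_1+\sin\theta\,e_2$, and a direct diagonalization shows that $P_{\xi^{\perp}}(\mathcal{E}_\varepsilon)$ has semi-axes $(\cos^2\theta+\varepsilon^2\sin^2\theta)^{1/2},1,\ldots,1$, so it is contained in a rotated copy of $B_2^{n-1}$.

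The remaining estimate of $S(\mathcal{E}_\varepsilon)$ is the core computation. Applying Rivin's formula \eqref{eq:rivin} together with $|\mathcal{E}_\varepsilon|=\omega_n\varepsilon$, the prefactor $\varepsilon$ cancels the $1/\varepsilon$ that comes out of the integrand, yielding
$$S(\mathcal{E}_\varepsilon)=n\omega_n\int_{S^{n-1}}\sqrt{\xi_1^{\,2}+\varepsilon^2(1-\xi_1^{\,2})}\,d\sigma(\xi)\xrightarrow[\varepsilon\to 0^+]{}n\omega_n\int_{S^{n-1}}|\xi_1|\,d\sigma(\xi)=:\beta_n>0$$
by dominated convergence. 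Combining the bounds, for all sufficiently small $\varepsilon$,
$$\frac{S(\mathcal{E}_\varepsilon)}{|\mathcal{E}_\varepsilon|^{1/n}\max_{\xi}S(P_{\xi^{\perp}}(\mathcal{E}_\varepsilon))}\gr\frac{\beta_n/2}{(\omega_n\varepsilon)^{1/n}(n-1)\omega_{n-1}}\longrightarrow +\infty,$$
which exceeds any prescribed $\alpha>0$. The only genuinely non-elementary ingredient is the ellipsoid projection extremality; that is the main obstacle and the content I expect to drive the rest of Section~4, most naturally via Kubota's formula reducing to the classical extremality of coordinate-subspace projections of ellipsoids by volume.
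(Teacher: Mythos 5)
Your proof is correct and follows the same strategy as the paper: test the inequality on a family of degenerate ellipsoids with one collapsing semi-axis, and use the extremality of $\xi=e_1$ for the surface area of hyperplane sections/projections established in Theorem~\ref{th:max-surface-ellipsoid-section}. The paper normalises by $\prod a_i=1$ (taking $a_2=\cdots=a_n=r$, $a_1=r^{-(n-1)}$, $r\to\infty$), while you normalise by $a_n=1$ (taking $a_1=\varepsilon\to 0^+$); since the quantity in question is scale-invariant these are the same family. The one genuine difference is in the final estimate: the paper invokes H\"older and Khintchine's inequality in Gauss space to lower-bound the ratio of Gaussian expectations that comes out of Rivin's formula, and then lets $r\to\infty$, whereas your normalisation cancels the $1/\varepsilon$ factor cleanly, so that $S(\mathcal{E}_\varepsilon)$ converges to the positive limit $n\omega_n\int_{S^{n-1}}|\xi_1|\,d\sigma(\xi)$ by dominated convergence while $|\mathcal{E}_\varepsilon|^{1/n}\to 0$. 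This is a modest simplification: it removes the Khintchine comparison at the price of nothing, since the observation that $P_{e_1^\perp}(\mathcal{E}_\varepsilon)=\mathcal{E}_\varepsilon\cap e_1^\perp=B_2^{n-1}$ is the same input. Your side remark giving a direct, elementary verification of the projection extremality for this particular family (via the $O(n-1)$-symmetry and a two-dimensional diagonalisation) is a nice self-contained alternative to citing the general theorem, and is also correct.
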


In fact, our examples will be given by ellipsoids. They will be based on the next result which answers a natural question
and might be useful in other situations too.

\begin{theorem}\label{th:max-surface-ellipsoid-section}Let ${\cal E}$ be an origin symmetric ellipsoid in ${\mathbb R}^n$ and write
$a_1\ls a_2\ls\cdots \ls a_n$ for the lengths and $e_1,e_2,\ldots ,e_n$ for the corresponding directions of its semi-axes.
If $1\ls k\ls n-1$ then for any $H\in G_{n,k}$ and any $0\ls j<k$ we have that
$$W_j({\cal E}\cap F_k)\ls W_j({\cal E}\cap H)\ls W_j(P_H({\cal E}))\ls W_j({\cal E}\cap E_k),$$
where $F_k=\mathrm{span}\{e_1,\ldots, e_k\}$ and $E_k=\mathrm{span}\{e_{n-k+1},\ldots, e_n\}.$
In particular, for every $\xi\in S^{n-1}$,
$$S({\cal E}\cap \xi^{\perp })\ls S(P_{\xi^{\perp }}({\cal E}))\ls S({\cal E}\cap e_1^{\perp }).$$
\end{theorem}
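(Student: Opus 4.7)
The plan is to reduce the entire chain to two ingredients: the trivial inclusion $\mathcal{E}\cap H\subseteq P_H(\mathcal{E})$ inside $H$, and Cauchy's interlacing inequality, which will let me compare the ordered semi-axes of $P_H(\mathcal{E})$ and $\mathcal{E}\cap H$ with those of the extremal coordinate slices through $F_k$ and $E_k$. The middle inequality $W_j(\mathcal{E}\cap H)\le W_j(P_H(\mathcal{E}))$ is immediate, because $\mathcal{E}\cap H=P_H(\mathcal{E}\cap H)\subseteq P_H(\mathcal{E})$ and quermassintegrals of convex bodies in $H\cong\mathbb{R}^k$ are monotone with respect to inclusion.

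For the outer inequalities I would first identify the semi-axes analytically. Let $A$ be the positive definite symmetric operator with $Ae_i=a_ie_i$, so that $\mathcal{E}=A(B_2^n)=\{x:\langle A^{-2}x,x\rangle\le 1\}$. Then $P_H(\mathcal{E})=(P_HA)(B_2^n)$ is the ellipsoid in $H$ whose squared semi-axes are the eigenvalues of the compression $(P_HA^2P_H)|_H$, while $\mathcal{E}\cap H$ is the ellipsoid in $H$ whose inverse squared semi-axes are the eigenvalues of $(P_HA^{-2}P_H)|_H$. Cauchy's interlacing theorem, applied to $A^2$ with eigenvalues $a_1^2\le\cdots\le a_n^2$, gives ordered semi-axes $c_1\le\cdots\le c_k$ of $P_H(\mathcal{E})$ satisfying $a_i\le c_i\le a_{n-k+i}$; applied to $A^{-2}$ and then inverted back to semi-axes (which reverses the ordering), it gives the same sandwich $a_i\le b_i\le a_{n-k+i}$ for the ordered semi-axes of $\mathcal{E}\cap H$.

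To close the argument I would use rigid-motion invariance of $W_j$ to identify each of $\mathcal{E}\cap F_k$, $\mathcal{E}\cap H$, $P_H(\mathcal{E})$, and $\mathcal{E}\cap E_k=P_{E_k}(\mathcal{E})$ (the last equality holds because $E_k$ is $A$-invariant) with a diagonal image of $B_2^k$ in $\mathbb{R}^k$, whose ordered semi-axes are $(a_i)_{i=1}^k$, $(b_i)$, $(c_i)$, and $(a_{n-k+i})_{i=1}^k$, respectively. The coordinatewise bounds $a_i\le b_i$ and $c_i\le a_{n-k+i}$ then give honest inclusions of these standard diagonal ellipsoids in $\mathbb{R}^k$, and monotonicity of $W_j$ delivers both $W_j(\mathcal{E}\cap F_k)\le W_j(\mathcal{E}\cap H)$ and $W_j(P_H(\mathcal{E}))\le W_j(\mathcal{E}\cap E_k)$. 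The only point requiring care is the correct bookkeeping of orderings in the Cauchy interlacing step for $A^{-2}$, where inverting eigenvalues reverses the order; beyond that, no real analytic obstacle remains, and the surface area statement follows at once by taking $k=n-1$, $j=1$ and noting $E_{n-1}=e_1^\perp$.
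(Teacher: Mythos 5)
Your proof is correct, and it rests on the same key ingredient as the paper's: Cauchy's interlacing theorem applied to compressions of $A^{\pm 2}$ onto $H$, giving the ordered semi-axis sandwich $a_i\le b_i\le a_{n-k+i}$ (this is exactly the paper's ``Generalisation of Rayleigh's formula''). The execution differs in two places, and in both your route is somewhat more economical. For the section inequalities the paper first passes to an auxiliary ellipsoid $\mathcal{E}'\supseteq\mathcal{E}$ whose first $n-k+1$ semi-axes are all inflated to $a_{n-k+1}$, applies the interlacing lemma to $\mathcal{E}'$, and then uses $\mathcal{E}'\cap E_k=\mathcal{E}\cap E_k$; you argue directly from the sandwich $b_j\le a_{n-k+j}$ for $\mathcal{E}\cap H$ itself, which makes the auxiliary body unnecessary. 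For the projection inequalities the paper appeals to duality, applying the section result to $\mathcal{E}^\circ$ and then passing to polars inside $H$ (using that the polar in $H$ of $\mathcal{E}^\circ\cap H$ is $P_H(\mathcal{E})$); you instead compute the squared semi-axes of $P_H(\mathcal{E})=(P_HA)(B_2^n)$ directly as the eigenvalues of $(P_HA^2P_H)|_H$ and invoke interlacing for $A^2$, which has the advantage of displaying the section/projection symmetry (compression of $A^{-2}$ versus compression of $A^2$) explicitly. Your bookkeeping of the order reversal when inverting the eigenvalues of $(P_HA^{-2}P_H)|_H$ is correct, the inclusion $\mathcal{E}\cap H\subseteq P_H(\mathcal{E})$ cleanly supplies the middle inequality, and the rigid-motion argument reducing each of the four bodies to a diagonal ellipsoid in $\mathbb{R}^k$ together with monotonicity of $W_j$ is exactly what is needed to close both outer inequalities.
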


The analogue of Theorem~\ref{th:max-surface-ellipsoid-section} for the volume of sections and projections
of ellipsoids is known to be true (for a proof see \cite{Klartag-VMilman-2005b} and
\cite{Dafnis-Paouris-2010}). With the same notation, for all $1\ls k\ls n-1$ one has
\begin{equation*}
\min_{H \in G_{n,k}} |{\cal E} \cap H| =\min_{H \in G_{n,k}}|P_H({\cal E})| =\omega_k  \prod_{i=1}^ka_i
\end{equation*}
and
\begin{equation*}
\max_{H \in G_{n,k}} |{\cal E} \cap H| = \max_{H \in G_{n,k}}|P_H({\cal E})|= \omega_k \prod_{i=n-k+1}^na_i.
\end{equation*}

For the proof of Theorem~\ref{th:max-surface-ellipsoid-section} we will use the following form of Cauchy's interlacing theorem
(see \cite[pp.~64]{Rao-book}).

\begin{theorem}\label{thm:interlace}
Let $A$ be a symmetric $n\times n$ matrix and consider the $k\times k$  matrix $B=PAP^{*}$, where $k\ls n$ and $P$ is the orthogonal projection
onto a subspace of dimension $k$. If the eigenvalues of $A$ are $\lambda_1\ls \lambda_2\ls\cdots\ls \lambda_n$, and those of $B$ are $\mu_1\ls \mu_2\ls\cdots\ls \mu_k$,
then for all $i\ls k$ we have
$$\lambda_{i}\ls \mu_{i}\ls \lambda_{n-k+i}.$$
\end{theorem}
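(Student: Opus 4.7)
The plan is to prove Cauchy's interlacing theorem via the Courant--Fischer min-max characterization of eigenvalues of a symmetric matrix. Let $W\subseteq {\mathbb R}^n$ be the $k$-dimensional range of $P^{*}$, and identify ${\mathbb R}^k$ isometrically with $W$ through $P^{*}$. Under this identification, $B=PAP^{*}$ becomes the compression of $A$ to $W$, and the crucial observation is that for every $y\in W$ one has $\langle By,y\rangle=\langle Ay,y\rangle$, because $P^{*}Py=y$ for such $y$. Thus the Rayleigh quotient of $B$ on $W$ is exactly the restriction to $W$ of the Rayleigh quotient $R(x):=\langle Ax,x\rangle/\|x\|^{2}$ of $A$.

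The next step is to recall the two dual forms of Courant--Fischer for a symmetric operator with eigenvalues listed in increasing order: for any $1\ls i\ls n$,
$$\lambda_{i}=\min_{\substack{U\subseteq{\mathbb R}^n\\ \dim U=i}}\max_{x\in U\setminus\{0\}}R(x)=\max_{\substack{U\subseteq{\mathbb R}^n\\ \dim U=n-i+1}}\min_{x\in U\setminus\{0\}}R(x),$$
and analogous formulas for $\mu_{i}$ with the outer optimization restricted to subspaces $U\subseteq W$ (and the Rayleigh quotient of $B$, which coincides with $R$ there).

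From here both inequalities are immediate by monotonicity of $\min$ and $\max$ under set restriction. For the lower bound $\mu_{i}\gr\lambda_{i}$, I use the min-max formula: the outer minimum for $\mu_{i}$ ranges only over $i$-dimensional subspaces of $W$, which is a subcollection of all $i$-dimensional subspaces of ${\mathbb R}^n$, so the minimum can only increase. For the upper bound $\mu_{i}\ls\lambda_{n-k+i}$, I use the max-min formula: writing $\mu_{i}$ as a maximum over $(k-i+1)$-dimensional subspaces of $W$ and $\lambda_{n-k+i}$ as a maximum over $(k-i+1)$-dimensional subspaces of ${\mathbb R}^n$, the former is taken over a smaller family and hence is no larger.

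The main (and only) obstacle is purely bookkeeping: since the convention here lists eigenvalues in increasing order, one has to match each inequality to the correct form of Courant--Fischer and verify the dimension count $n-(n-k+i)+1=k-i+1$ that appears when converting between the ``min-max'' and ``max-min'' formulations. No analytic input beyond the min-max principle itself is needed, and the self-adjointness of $A$ together with the fact that $P^{*}$ is an isometric embedding of ${\mathbb R}^k$ onto $W$ ensures that $B$ is symmetric so that Courant--Fischer applies to it in the first place.
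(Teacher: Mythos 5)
Your proposal is correct. Note that the paper does not actually prove this statement: it cites it directly as Cauchy's interlacing theorem from Rao's book (the reference given is \cite{Rao-book}, p.\ 64), and uses it as a black box to derive the generalised Rayleigh formula (Lemma~\ref{lem:rayl}). So there is no ``paper proof'' to compare against; what you have supplied is the standard textbook proof via the Courant--Fischer min-max and max-min characterizations, and it is a valid and complete derivation. Your key observations are all correct: identifying ${\mathbb R}^k$ with $W = \mathrm{range}(P^*)$ via the isometry $P^*$ turns the Rayleigh quotient of $B$ into the restriction of that of $A$; the min-max form
\[
\lambda_i=\min_{\dim U=i}\max_{0\neq x\in U}R(x),\qquad
\mu_i=\min_{\substack{\dim U=i\\ U\subseteq W}}\max_{0\neq x\in U}R(x)
\]
gives $\mu_i\gr\lambda_i$ by shrinking the family of admissible $U$; and the max-min form with the dimension count $n-(n-k+i)+1=k-i+1$ gives $\mu_i\ls\lambda_{n-k+i}$ for the same reason. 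The one small notational wrinkle is your line ``for every $y\in W$ one has $\langle By,y\rangle=\langle Ay,y\rangle$'': strictly $B$ acts on ${\mathbb R}^k$ rather than on $W$, so this should be read modulo the identification $z\leftrightarrow P^*z$ you set up in the preceding sentence, i.e.\ $\langle Bz,z\rangle_{{\mathbb R}^k}=\langle AP^*z,P^*z\rangle_{{\mathbb R}^n}$. With that reading understood, the argument is airtight.
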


Now, let ${\cal E}$ be an origin symmetric ellipsoid in ${\mathbb R}^n$ and let $a_1\ls\cdots\ls a_n$ be the lengths of its principal semi-axes. We can write ${\cal E}=\{x\in {\mathbb R}^n:\langle Ax,x\rangle\ls 1\}$, where $A$ is an $n\times n$ symmetric positive definite matrix.
The relation between the eigenvalues $\lambda_1(A)\ls \cdots \ls \lambda_n(A)$ of $A$ and the lengths of the principal semi-axes of the ellipsoid ${\cal E}$ is given by  $$a_j=\frac{1}{\sqrt{\lambda_{n-j+1}(A)}}.$$
A $k$-dimensional section of ${\cal E}$ can be obtained by restriction onto a $k-$dimensional subspace $H$:
$$\mathcal{E}\cap H=\{x\in H\colon\ \langle Ax, x\rangle\ls 1\}.$$
Let $b_1\ls\cdots\ls b_k$ be the lengths of the principal semi-axes of ${\cal E}\cap H$.
If $\{u_1, \ldots,u_k\}$ is an orthonormal basis of $H$ then we can write any $x\in H$ as
$$x = y_1u_1 +\cdots+ y_ku_k,$$ for some vector
$y = (y_1,\ldots, y_k)\in\mathbb{R}^k$. Thus, we can write $x=Uy$, where $U$ is an $n\times k$ matrix with columns $u_i$ for $1\ls i\ls k$. Using this language we can write
$$\mathcal{E}\cap H=\{y\in\mathbb{R}^k\colon\ \langle AUy, Uy\rangle\ls 1\}=\{y\in\mathbb{R}^k\colon\ \langle U^{*}AUy, y\rangle\ls 1\}.$$
By our previous observations we conclude that  the $j$-th principal semi-axis of $\mathcal{E}\cap H$ is given by
$$b_j=(\lambda_{k-j+1}(U^{*}AU))^{-1/2}.$$
We can now use Theorem \ref{thm:interlace} for $i=k-j+1$ to get
$$\lambda_{k-j+1}(A)\ls \lambda_{k-j+1}(U^{*}AU)\ls\lambda_{n-j+1}(A),$$
which implies that
$$a_j\ls b_j\ls a_{n-k+j}.$$
Therefore we obtain the following geometric consequence of Theorem~\ref{thm:interlace}.

\begin{lemma}[Generalisation of Rayleigh's formula]\label{lem:rayl}Let ${\cal E}$ be an origin symmetric ellipsoid in ${\mathbb R}^n$ and write
$a_1\ls a_2\ls\cdots \ls a_n$ for the lengths of its semi-axes. If $H$ is a $k$-dimensional subspace of ${\mathbb R}^n$
then ${\cal E}\cap H$ is an origin symmetric ellipsoid and its semi-axes $b_1\ls b_2\ls\cdots\ls b_{k}$ satisfy
$$a_j\ls b_j\ls a_{n-k+j},$$
for all $1\ls j\ls k$.
\end{lemma}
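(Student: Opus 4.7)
The plan is to translate the geometric question about the semi-axes of $\mathcal{E}$ and $\mathcal{E}\cap H$ into a spectral question about a symmetric positive definite matrix and its compression onto a $k$-dimensional subspace, and then invoke Cauchy's interlacing theorem (Theorem \ref{thm:interlace}). Much of the preparatory computation is already done in the paragraphs preceding the lemma, so the proof really amounts to carefully composing those pieces and tracking the order-reversal between eigenvalues and semi-axis lengths.

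First, I would represent $\mathcal{E} = \{x\in\mathbb{R}^n : \langle Ax,x\rangle \leq 1\}$ for a symmetric positive definite matrix $A$ with eigenvalues $\lambda_1(A) \leq \cdots \leq \lambda_n(A)$; the principal semi-axis lengths are $a_j = \lambda_{n-j+1}(A)^{-1/2}$, so the $a_j$ being increasing corresponds to the $\lambda_i$ being increasing under the index reversal $j\mapsto n-j+1$. Picking an orthonormal basis $\{u_1,\dots,u_k\}$ of $H$ and assembling it into an $n\times k$ matrix $U$ with orthonormal columns, the parametrization $x = Uy$ identifies $\mathcal{E}\cap H$ with $\{y\in\mathbb{R}^k : \langle U^{*}AU y,y\rangle \leq 1\}$. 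Since $U^{*}AU$ is symmetric positive definite, $\mathcal{E}\cap H$ is an origin symmetric ellipsoid in $H$, and by the same correspondence its semi-axes satisfy $b_j = \lambda_{k-j+1}(U^{*}AU)^{-1/2}$.

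The decisive step is then to apply Theorem \ref{thm:interlace} to the compression $U^{*}AU$. Taking the interlacing index to be $i = k-j+1$ yields
$$\lambda_{k-j+1}(A) \;\leq\; \lambda_{k-j+1}(U^{*}AU) \;\leq\; \lambda_{n-k+(k-j+1)}(A) \;=\; \lambda_{n-j+1}(A).$$
Taking inverse square roots (which reverses inequalities) gives $\lambda_{n-j+1}(A)^{-1/2} \leq b_j \leq \lambda_{k-j+1}(A)^{-1/2}$. The left-hand side is exactly $a_j$, and using the index identity $a_{n-k+j} = \lambda_{n-(n-k+j)+1}(A)^{-1/2} = \lambda_{k-j+1}(A)^{-1/2}$ the right-hand side is exactly $a_{n-k+j}$, giving $a_j \leq b_j \leq a_{n-k+j}$ for every $1\leq j\leq k$, which is the desired inequality.

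There is no genuine obstacle to this argument; the only thing requiring care is bookkeeping, since the semi-axis ordering runs opposite to the eigenvalue ordering. The substantive input is Cauchy's interlacing theorem, and the main task is to choose the interlacing index correctly so that the two ends of the interlacing interval translate into $a_j$ on one side and $a_{n-k+j}$ on the other.
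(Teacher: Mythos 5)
Your proof is correct and coincides with the paper's argument: both encode $\mathcal{E}$ as $\{x : \langle Ax,x\rangle\leq 1\}$, identify $\mathcal{E}\cap H$ with the compression $U^{*}AU$, and apply Cauchy's interlacing theorem (Theorem~\ref{thm:interlace}) at index $i=k-j+1$, then invert the eigenvalue/semi-axis correspondence. The index bookkeeping you carry out is exactly what the paper does in the paragraph immediately preceding the lemma.
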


\begin{proof}[Proof of Theorem~$\ref{th:max-surface-ellipsoid-section}$]We may assume that $\{e_1,\ldots ,e_n\}$ is the standard
basis of ${\mathbb R}^n$ and write
$${\cal E}=\Big\{x\in {\mathbb R}^n:\sum_{j=1}^n\frac{x_j^2}{a_j^2}\ls 1\Big\}.$$
Consider the ellipsoid
$${\cal E}^{\prime }:=\Big\{x\in {\mathbb R}^n:\sum_{j=1}^{n-k+1}\frac{x_j^2}{a_{n-k+1}^2}+\sum_{j=n-k+2}^{n}\frac{x_j^2}{a_{j}^2}\ls 1\Big\}.$$
We clearly have ${\cal E}\subseteq {\cal E}^{\prime }$. Then, for any $k-$dimensional subspace $H$
we have that ${\cal E}\cap H\subseteq {\cal E}^{\prime }\cap H$, and hence Kubota's formula implies that
$$W_j({\cal E}\cap H)\ls W_j({\cal E}^{\prime }\cap H)$$
for all $0\ls j< k$. On the other hand, if $b_1\ls b_2\ls \cdots \ls b_{k}$ are the lengths of the semi-axes of the
ellipsoid ${\cal E}^{\prime }\cap H$ then Lemma~\ref{lem:rayl} shows that
$$a_{n-k+1}\ls b_{1}\ls a_{n-k+1}\ls b_{2}\ls a_{n-k+2}\ls\cdots\ls b_k\ls a_n,$$
therefore $a_{n-k+1}=b_{1}$ and $b_j\ls a_{n-k+j}$ for all $1\ls j\ls k$. Thus, all the semi-axes of ${\cal E}^{\prime }\cap H$
are smaller than or equal to the corresponding ones of ${\cal E}^{\prime }\cap E_k$, which implies that
$$W_j({\cal E}^{\prime }\cap H)\ls W_j({\cal E}^{\prime }\cap E_k)$$
for all $1\ls j\ls k$. Combining the above we get
$$W_j({\cal E}\cap H)\ls W_j({\cal E}^{\prime }\cap H)\ls W_j({\cal E}^{\prime }\cap E_k)
=W_j({\cal E}\cap E_k),$$
where the last equality follows from the observation that ${\cal E}^{\prime }\cap E_k={\cal E}\cap E_k$. The proof of the
inequality $W_j({\cal E}\cap F_k)\ls W_j({\cal E}\cap H)$ is similar.

For the proof of the corresponding result for projections we may use a duality argument.
Given two ellipsoids $\mathcal{E}_1$ and $\mathcal{E}_2$, with semi-axes $a_1\ls\cdots\ls a_n$ and $b_1\ls\cdots\ls b_n$ respectively,
we will write $\mathcal{E}_1\preceq\mathcal{E}_2$ if $a_i\ls b_i$ for all $i$. Using this notation, what we have proved is that
$$\mathcal{E}\cap F_k\preceq\mathcal{E}\cap H\preceq\mathcal{E}\cap E_k$$
for every $H\in G_{n,k}$. Now, we start with the ellipsoid $\mathcal{E}^{\circ }$. Since the lengths of the semi-axes of $\mathcal{E}^{\circ}$
are the reciprocals of the ones of $\mathcal{E}$, we see that
$$\mathcal{E}^{\circ}\cap E_k\preceq \mathcal{E}^{\circ}\cap H\preceq \mathcal{E}^{\circ}\cap F_k,$$
and hence their corresponding polars satisfy
$$P_{F_k}(\mathcal{E})\preceq P_{H}(\mathcal{E})\preceq P_{E_k}(\mathcal{E}).$$
The result follows from these observations.
\end{proof}

\begin{remark}\rm A formula which is related to this discussion is proved in \cite{Kab-Zap-2016}. If ${\cal F}$ is an ellipsoid
in ${\mathbb R}^k$ then
\begin{equation}\label{eq:KZ}W_{k-j}({\cal F})=\frac{|{\cal F}|}{\omega_k}\,W_j({\cal F}^{\circ })\end{equation}
for every $1\ls j\ls k-1$. Let ${\cal E}$ be an ellipsoid in ${\mathbb R}^n$, and let $1\ls k\ls n-1$ and $H\in G_{n,k}$.
Keeping the notation $E_k$ and $F_k$ as above, and applying \eqref{eq:KZ} for the ellipsoid ${\cal E}\cap H$, for every $1\ls j\ls k-1$ we see that
$$\frac{W_{k-j}({\cal E}\cap H)}{|{\cal E}\cap H|}=\frac{1}{\omega_k}\,|W_j(P_H({\cal E}^{\circ }))|
\ls \frac{1}{\omega_k}\,|W_j(P_{F_k}({\cal E}^{\circ }))|=\frac{W_{k-j}({\cal E}\cap F_k)}{|{\cal E}\cap F_k|}.$$
In other words, the ratio $W_{k-j}({\cal E}\cap H)/|{\cal E}\cap H|$ is maximized when $H=F_k$, and similarly it is minimized
when $H=E_k$. Analogously, applying \eqref{eq:KZ} for the ellipsoid $P_H({\cal E})$, for every $1\ls j\ls k-1$ we see that
$$\frac{W_{k-j}(P_H({\cal E}))}{|P_H({\cal E})|}=\frac{1}{\omega_k}\,|W_j({\cal E}^{\circ }\cap H)|
\ls \frac{1}{\omega_k}\,|W_j({\cal E}^{\circ }\cap F_k)|=\frac{W_{k-j}(P_{F_k}{\cal E})}{|P_{F_k}({\cal E})|}.$$
In other words, the ratio $W_{k-j}(P_H({\cal E}))/|P_H({\cal E})|$ is also maximized when $H=F_k$, and similarly it is minimized
when $H=E_k$.
\end{remark}

We pass now to the proof of Theorem~\ref{th:negative-1b} and of the more general Theorem~\ref{th:negative-general}

\smallskip

\begin{proof}[Proof of Theorem~$\ref{th:negative-1b}$]We shall use the next formula of Rivin (see \cite{Rivin-2007}):
If ${\cal E}$ is an ellipsoid in ${\mathbb R}^n$ with semi-axes $a_1\ls\cdots \ls a_n$ in the directions of $e_1,\ldots ,e_n$ then
$$S({\cal E})=n\,|{\cal E}|\,\int_{S^{n-1}}\Big (\sum_{i=1}^n\frac{\xi_i^2}{a_i^2}\Big)^{1/2}d\sigma (\xi ).$$
Recall also that for any norm $\|\cdot\|$ on ${\mathbb R}^n$ we have that
$${\mathbb E}\|G\|=d_n\int_{S^{n-1}}\|\xi\|\,d\sigma (\xi ),$$
where $G$ is a standard Gaussian random vector and $d_n\sim\sqrt{n}$.

Now assume that there exists a constant $\alpha_n>0$ such that we have the following inequality for ellipsoids:
\begin{equation}\label{eq:max}S({\cal E})\ls \alpha_n|{\cal E}|^{1/n}\max_{\xi\in S^{n-1}}S({\cal E}\cap\xi^{\perp }).\end{equation}
From Theorem~\ref{th:max-surface-ellipsoid-section} we know that the maximum is attained for the section ${\cal E}\cap e_1^{\perp}$. Then we have
$$\max_{\xi\in S^{n-1}}S({\cal E}\cap\xi^{\perp })=S({\cal E}\cap e_1^{\perp })=(n-1)\,|{\cal E}\cap e_1^{\perp }|\,\int_{S^{n-2}}\Big (\sum_{i=2}^n\frac{\xi_i^2}{a_i^2}\Big)^{1/2}d\sigma (\xi ).$$
We may assume that $\prod_{i=1}^na_i=1$. Then, we can rewrite \eqref{eq:max} as
\begin{equation*}n\omega_n\cdot\frac{1}{d_n}{\mathbb E}\Big[\Big (\sum_{i=1}^n\frac{g_i^2}{a_i^2}\Big)^{1/2}\Big]
\ls \alpha_n\omega_n^{1/n}\cdot (n-1)\omega_{n-1}\frac{1}{a_1}\cdot \frac{1}{d_{n-1}}{\mathbb E}\Big[\Big (\sum_{i=2}^n\frac{g_i^2}{a_i^2}\Big)^{1/2}\Big].\end{equation*}
Since $x\mapsto \left (\sum_{i=1}^n\frac{x_i^2}{a_i^2}\right)^{1/2}$ is a seminorm, using H\"{o}lder and Khintchine's inequality 
for this seminorm in Gauss space we get
\begin{equation*}\frac{{\mathbb E}\left[\left (\sum_{i=1}^n\frac{g_i^2}{a_i^2}\right)^{1/2}\right]}{{\mathbb E}\left[\left  (\sum_{i=2}^n\frac{g_i^2}{a_i^2}\right)^{1/2}\right]}
\gr c\left (\frac{{\mathbb E}\Big(\sum_{i=1}^n\frac{g_i^2}{a_i^2}\Big)}{{\mathbb E}\Big (\sum_{i=2}^n\frac{g_i^2}{a_i^2}\Big)}\right )^{1/2}
= c\left (\frac{\sum_{i=1}^n\frac{1}{a_i^2}}{\sum_{i=2}^n\frac{1}{a_i^2}}\right )^{1/2},
\end{equation*}
and hence 
\begin{equation*}\alpha_n \gr c\frac{n\omega_n^{\frac{n-1}{n}}}{(n-1)\omega_{n-1}}\,\frac{d_{n-1}}{d_n}a_1\left (\frac{\sum_{i=1}^n\frac{1}{a_i^2}}{\sum_{i=2}^n\frac{1}{a_i^2}}\right )^{1/2}= c\frac{n\omega_n^{\frac{n-1}{n}}}{(n-1)\omega_{n-1}}\,\frac{d_{n-1}}{d_n}
\left (\frac{1+\sum_{i=2}^n\frac{a_1^2}{a_i^2}}{\sum_{i=2}^n\frac{1}{a_i^2}}\right )^{1/2}.
\end{equation*}
Now choose $a_2=\cdots =a_n=r$ and $a_1=r^{-(n-1)}$. Then,
$$\left (\frac{1+\sum_{i=2}^n\frac{a_1^2}{a_i^2}}{\sum_{i=2}^n\frac{1}{a_i^2}}\right )^{1/2}=
\left (\frac{1+\frac{n-1}{r^{2n}}}{\frac{n-1}{r^2}}\right )^{1/2}=\left (\frac{1}{r^{2n-2}}+\frac{r^2}{n-1}\right )^{1/2}\to\infty $$
as $r\to\infty $. So, we arrive at a contradiction, i.e. there can be no upper bound for $\alpha_n$. \end{proof}

\begin{remark}\rm Let us note here that a reverse inequality
can be obtained at least when $K$ is in some of the classical positions. It is proved in \cite{Giannopoulos-Hartzoulaki-Paouris-2002}
that for any convex body $K$ in ${\mathbb R}^n$ and any $\xi\in S^{n-1}$ we have
$$\frac{S(P_{\xi^{\perp }}(K))}{|P_{\xi^{\perp }}(K)|}\ls \frac{2(n-1)}{n}\frac{S(K)}{|K|},$$
therefore
$$|K|\,\max_{\xi\in S^{n-1}}S(P_{\xi^{\perp }}(K))\ls \frac{2(n-1)}{n}S(K)\max_{\xi\in S^{n-1}}|P_{\xi^{\perp }}(K)|.$$
Since we trivially have
$$|P_{\xi^{\perp }}(K)|=\frac{1}{2}\int_{S^{n-1}}|\langle \xi,\theta\rangle |\,d\sigma_K(\theta )\ls \frac{1}{2}S(K),$$
we see that
$$|K|\,\max_{\xi\in S^{n-1}}S(P_{\xi^{\perp }}(K))\ls \frac{n-1}{n}S(K)^2.$$
On the other hand, if $K$ is in some classical position (e.g. isotropic or John's position or minimal surface area or minimal mean width
position; see \cite[Chapter~2]{AGA-book}) then we know that a reverse isoperimetric inequality of the form $S(K)\ls cn|K|^{\frac{n-1}{n}}$
holds true (with an extra $\log n$-term in the minimal mean width position). Combining the above we see that, in this case,
$$|K|^{\frac{1}{n}}\,\max_{\xi\in S^{n-1}}S(P_{\xi^{\perp }}(K))\ls cn\,S(K)$$
for some absolute constant $c>0$.
\end{remark}

For the more general question, where surface area is replaced by any quermassintegral, we may exploit a formula from \cite{MTY-2020} for
the $j$-quermassintegrals of ellipsoids of revolution, i.e. ellipsoids of the form
$${\cal E}_{r,s}=\left\{x\in {\mathbb R}^m:\sum_{i=1}^{m-1}\frac{x_i^2}{r^2}+\frac{x_m^2}{s^2}\ls 1\right\}.$$
For every $j=0,1,\ldots ,m$ one has
\begin{equation}\label{eq:tatarko}W_j({\cal E}_{r,s})=\omega_mr^{m-j}\int_{S^{m-1}}
\Big(\frac{s^2}{r^2}\sum_{i=1}^{m-j}\theta_i^2+\sum_{i=m-j+1}^m\theta_i^2\Big)^{1/2}d\sigma (\theta ).\end{equation}

\begin{theorem}\label{th:negative-general}Let $n\gr 2$, $1\ls k\ls n$ and $0\ls j\ls n-k-1$. For every $\alpha >0$ there exists a convex body $K$ in ${\mathbb R}^n$ such that
\begin{equation*}W_j(K)>\alpha \,|K|^{\frac{k}{n}}\,\max_{F\in G_{n,n-k}}W_j(P_F(K))\gr \alpha \,|K|^{\frac{k}{n}}\,\max_{F\in G_{n,n-k}}W_j(K\cap F).\end{equation*}
\end{theorem}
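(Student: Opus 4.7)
The plan is to extend the argument of Theorem~\ref{th:negative-1b} to general quermassintegrals, replacing Rivin's formula by \eqref{eq:tatarko}. Fix $n\gr 2$, $1\ls k\ls n-1$ and (the meaningful range) $1\ls j\ls n-k-1$; the case $j=0$ reduces to the usual slicing problem for volume and behaves differently, so one should think of $j\gr 1$. For $r>1$, consider the ellipsoid of revolution
$${\cal E}_r=\Big\{x\in{\mathbb R}^n:\ r^{2(n-1)}x_1^2+r^{-2}\sum_{i=2}^n x_i^2\ls 1\Big\}$$
with semi-axes $a_1=r^{-(n-1)}$ and $a_2=\cdots=a_n=r$. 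Its volume $|{\cal E}_r|=\omega_n$ is independent of $r$.

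By Theorem~\ref{th:max-surface-ellipsoid-section}, applied in codimension $k$, both maxima
$\max_{F\in G_{n,n-k}}W_j({\cal E}_r\cap F)$ and $\max_{F\in G_{n,n-k}}W_j(P_F({\cal E}_r))$
are attained at $F=E_{n-k}:={\rm span}\{e_{k+1},\ldots,e_n\}$. Since $E_{n-k}$ avoids the short direction $e_1$, a direct inspection shows
$${\cal E}_r\cap E_{n-k}=P_{E_{n-k}}({\cal E}_r)=rB_2^{n-k},$$
so $W_j(P_{E_{n-k}}({\cal E}_r))=\omega_{n-k}\,r^{n-k-j}$.

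For $W_j({\cal E}_r)$ itself, I invoke \eqref{eq:tatarko}; by the rotational invariance of $d\sigma$ the convention that the axis of revolution lie along $e_n$ is immaterial, and placing it at $e_1$ gives
$$W_j({\cal E}_r)=\omega_n r^{n-j}\int_{S^{n-1}}\Big(r^{-2n}\sum_{i=1}^{n-j}\theta_i^2+\sum_{i=n-j+1}^n\theta_i^2\Big)^{1/2}d\sigma(\theta).$$
The integrand is bounded by $1$ and decreases pointwise, as $r\to\infty$, to $\big(\sum_{i=n-j+1}^n\theta_i^2\big)^{1/2}$, so by dominated convergence
$$W_j({\cal E}_r)=c_{n,j}\,r^{n-j}(1+o(1)),\qquad c_{n,j}:=\omega_n\int_{S^{n-1}}\Big(\sum_{i=n-j+1}^n\theta_i^2\Big)^{1/2}d\sigma>0,$$
where strict positivity of $c_{n,j}$ uses the hypothesis $j\gr 1$.

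Combining the above estimates,
$$\frac{W_j({\cal E}_r)}{|{\cal E}_r|^{k/n}\max_{F\in G_{n,n-k}}W_j(P_F({\cal E}_r))}=\frac{c_{n,j}(1+o(1))}{\omega_n^{k/n}\,\omega_{n-k}}\,r^{k}\longrightarrow\infty\quad(r\to\infty),$$
so given any $\alpha>0$ it suffices to take $K={\cal E}_r$ for $r$ sufficiently large; the section inequality is then inherited from the projection one via the chain in Theorem~\ref{th:max-surface-ellipsoid-section}. The only delicate step is the correct application of \eqref{eq:tatarko}: one must verify that the axis-of-revolution direction does not affect the integral, and then extract the leading order $r^{n-j}$ as $r\to\infty$. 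Once the scaling $W_j({\cal E}_r)\sim c_{n,j}r^{n-j}$ and the identification of the maximizing subspace $E_{n-k}$ are in hand, the polynomial blow-up at rate $r^k$ is immediate.
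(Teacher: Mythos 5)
Your argument is essentially the paper's own: same family of ellipsoids of revolution (you place the short axis along $e_1$, the paper along $e_n$, which is immaterial by the rotational invariance you note), same identification of the maximizing subspace via Theorem~\ref{th:max-surface-ellipsoid-section}, and same invocation of~\eqref{eq:tatarko}. The one genuine stylistic difference is that you extract the leading order $W_j({\cal E}_r)\sim c_{n,j}\,r^{n-j}$ by dominated convergence, whereas the paper uses a Khintchine-type comparison of the first and second moments of the seminorm on the sphere; both give the same $r^{n-j}$ scaling and hence the $r^{k}$ blow-up. Your explicit restriction to $j\gr 1$ is the right move: for $j=0$ the surviving term $\frac{j}{n}$ in the paper's estimate vanishes (equivalently $c_{n,0}=0$ in your notation), so the ratio stays bounded and the statement actually fails; the paper's hypothesis ``$0\ls j$'' in Theorem~\ref{th:negative-general} appears to be a slip, since the introduction's Theorem~\ref{th:negative-2} correctly demands $1\ls j$, and the proof there also tacitly uses $j\gr1$.
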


\begin{proof}Assume that for some $n\gr 2$, $1\ls k\ls n$ and $0\ls j\ls n-k$ there exists a constant $C(n,k,j)>0$ such that
\begin{equation}\label{eq:general-false}W_j(K)\ls C(n,k,j)|K|^{\frac{k}{n}}\,\max_{F\in G_{n,n-k}}W_j(K\cap F).\end{equation}
Then, for any $r>1>s$ with $r^{n-1}s=1$ consider the ellipsoid ${\cal E}_{r,s}$. Recall that
$$\max_{F\in G_{n,n-k}}W_j({\cal E}_{r,s}\cap F)= W_j({\cal E}_{r,s}\cap F_{n-k}),$$
where $F_{n-k}=\mathrm{span}\{e_1,\ldots, e_{n-k}\}$. Note that $|{\cal E}_{r,s}|=\omega_n$ and that ${\cal E}_{r,s}\cap F_{n-k}$ is a ball
of radius $r$. Using
\eqref{eq:tatarko} and assuming that \eqref{eq:general-false} holds true, we see that
\begin{equation*}\omega_nr^{n-j}\int_{S^{n-1}}
\Big(\frac{s^2}{r^2}\sum_{i=1}^{n-j}\theta_i^2+\sum_{i=m-j+1}^m\theta_i^2\Big)^{1/2}d\sigma (\theta )
\ls C(n,k,j)\omega_n^{\frac{k}{n}}\omega_{n-k}r^{n-k-j}.\end{equation*}
Since
\begin{align*}\int_{S^{n-1}}\Big(\frac{s^2}{r^2}\sum_{i=1}^{n-j}\theta_i^2+\sum_{i=m-j+1}^m\theta_i^2\Big)^{1/2}d\sigma (\theta )
&\approx \Big(\int_{S^{n-1}}
\Big(\frac{s^2}{r^2}\sum_{i=1}^{n-j}\theta_i^2+\sum_{i=m-j+1}^m\theta_i^2\Big)d\sigma (\theta )\Big)^{1/2}\\
&=\Big(\frac{n-j}{n}\frac{s^2}{r^2}+\frac{j}{n}\Big)^{1/2}=\Big(\frac{n-j}{n}\frac{1}{r^{2n}}+\frac{j}{n}\Big)^{1/2},\end{align*}
we must have
$$r^k\Big(\frac{n-j}{n}\frac{1}{r^{2n}}+\frac{j}{n}\Big)^{1/2}\ls c_1C(n,k,j)\frac{\omega_{n-k}}{\omega_n^{\frac{n-k}{n}}}$$
for every $r>1$, which leads to a contradiction if we let $r\to\infty $. \end{proof}

\section{Bounds in terms of the parameter $t(K)$}

Let $K$ be a convex body in ${\mathbb R}^n$ with barycenter at the origin. Recall that $r(K)$ denotes the inradius of $K$; this
is the largest $r>0$ such that $x_0+rB_2^n\subseteq K$ for some $x_0\in K$. We also define the parameter
$$t(K):=\left (\frac{|K|}{|r(K)B_2^n|}\right )^{1/n}.$$
In this section we provide some positive results on the slicing problem for quermassintegrals, which however depend on $t(K)$.

\begin{theorem}\label{th:5.2.1}
Let $K$ be a convex body with barycenter at the origin in $\mathbb{R}^n$. Then, for every $1\ls j\ls n-k-1 \ls n-1$
we have that
$$W_j(K)\ls \alpha_{n,k,j}L_K^\frac{k(n-k-j)}{n-k}t(K)^j|K|^{\frac{k}{n}}\max_{H\in G_{n,n-k}}W_j(K\cap H),$$
where $\alpha_{n,k,j}=(\omega_n^\frac{j}{n}/\omega_{n-k}^\frac{j}{n-k})c^{\frac{n-k-j}{n-k}}$ and $c>0$ is an absolute constant.
\end{theorem}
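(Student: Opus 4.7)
The plan is to split the inequality into two essentially independent pieces: an inradius bound that converts $W_j(K)$ into a power of $|K|$ weighted by $t(K)^j$, and a sectional bound that converts the leftover power of $|K|$ into the $\max_H W_j(K\cap H)$ on the right.

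\emph{Step 1 (inradius bound via mixed volumes).} By translation invariance of mixed volumes I may assume that $r(K)B_2^n \subseteq K$, i.e.\ $B_2^n \subseteq K/r(K)$. Monotonicity of mixed volumes in each slot gives
\begin{equation*}
W_j(K)=V((K,n-j),(B_2^n,j))\ls V((K,n-j),(K/r(K),j))=\frac{|K|}{r(K)^j}.
\end{equation*}
The definition of $t(K)$ yields $r(K)^{-j}=t(K)^j\omega_n^{j/n}|K|^{-j/n}$, so
\begin{equation*}
W_j(K)\ls t(K)^j\,\omega_n^{j/n}\,|K|^{k/n}\,|K|^{(n-k-j)/n}.
\end{equation*}

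\emph{Step 2 (sectional bound via Grinberg--Busemann-Straus plus Dafnis-Paouris).} The estimate $\tilde\Phi_{[k]}(K)\gr c/L_K$ from \cite{Dafnis-Paouris-2012} gives
\begin{equation*}
\left(\int_{G_{n,n-k}}|K\cap H|^n\,d\nu_{n,n-k}(H)\right)^{1/n}\gr (c/L_K)^k\,|K|^{(n-k)/n},
\end{equation*}
and passing from the $L^n$-average to the maximum yields $|K|^{(n-k)/n}\ls (L_K/c)^k\max_{H\in G_{n,n-k}}|K\cap H|$. Raising this to the power $(n-k-j)/(n-k)$, I obtain
\begin{equation*}
|K|^{(n-k-j)/n}\ls (L_K/c)^{k(n-k-j)/(n-k)}\Big(\max_{H\in G_{n,n-k}}|K\cap H|\Big)^{(n-k-j)/(n-k)}.
\end{equation*}

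\emph{Step 3 (Aleksandrov inside the section).} For a fixed $H\in G_{n,n-k}$, the body $K\cap H$ lives in the ambient dimension $n-k$; the monotonicity of the Aleksandrov sequence $s\mapsto (W_{(n-k)-s}(K\cap H)/\omega_{n-k})^{1/s}$ applied with $s=n-k-j$ versus $s=n-k$ gives
\begin{equation*}
\left(\frac{|K\cap H|}{\omega_{n-k}}\right)^{1/(n-k)}\ls \left(\frac{W_j(K\cap H)}{\omega_{n-k}}\right)^{1/(n-k-j)},
\end{equation*}
which rearranges to $|K\cap H|^{(n-k-j)/(n-k)}\ls \omega_{n-k}^{-j/(n-k)}W_j(K\cap H)$. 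Taking maxima over $H$ and substituting into the Step~2 bound, then into the Step~1 bound, produces the claimed inequality with constant $\alpha_{n,k,j}=(\omega_n^{j/n}/\omega_{n-k}^{j/(n-k)})c^{k(n-k-j)/(n-k)}$ times $L_K^{k(n-k-j)/(n-k)}t(K)^j$.

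The main subtlety is the bookkeeping of exponents in Steps 2--3: Dafnis-Paouris naturally produces $|K\cap H|$ to the fractional power $(n-k-j)/(n-k)$, and the key point is that this is exactly the exponent for which Aleksandrov's inequality, applied \emph{inside} $H$ (dimension $n-k$, not $n$), cleanly upgrades $|K\cap H|^{(n-k-j)/(n-k)}$ to $W_j(K\cap H)$. The $t(K)^j$ factor arises entirely in Step~1 and the $L_K$-power is inherited from Step~2; no extra positional hypothesis on $K$ is needed beyond the assumption that its barycenter lies at the origin, which is exactly what makes $L_K$ well-defined for Dafnis--Paouris to apply.
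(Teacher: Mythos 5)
Your proof is correct and follows essentially the same route as the paper: the inradius bound $W_j(K)\ls |K|/r(K)^j$ from monotonicity and translation invariance of mixed volumes, the Dafnis--Paouris lower bound on the dual affine quermassintegral to pass from $|K|^{(n-k)/n}$ to $\max_H|K\cap H|$, and Aleksandrov's inequalities applied inside the $(n-k)$-dimensional subspace $H$ to upgrade $|K\cap H|^{(n-k-j)/(n-k)}$ to $W_j(K\cap H)$. The exponent bookkeeping matches the paper's, and in fact the constant you obtain, $c^{k(n-k-j)/(n-k)}$, agrees with the one derived in the paper's proof (the exponent in the theorem statement appears to have a typo, missing the factor of $k$).
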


\begin{proof}Using the monotonicity of mixed volumes we may write
$$W_j(K)=V((K,n-j),(B_2^n,j))\ls V\left((K,n-j),\left(\frac{K}{r(K)},j\right)\right)=\frac{1}{r(K)^j}V(K,\ldots ,K)=\frac{|K|}{r(K)^j}.$$
We rewrite this inequality in the form
\begin{equation}\label{eq:5.1}W_j(K)\ls\omega_n^\frac{j}{n}t(K)^j|K|^\frac{n-j}{n}=\omega_n^\frac{j}{n}t(K)^j|K|^\frac{k}{n}|K|^\frac{n-k-j}{n}.\end{equation}
Now, we use the estimate
$$\frac{c_0}{L_K}\le \widetilde{\Phi}_{[k]}(K):=\frac{1}{|K|^\frac{n-k}{nk}}\left(\int_{G_{n,n-k}}|K\cap H|^nd\nu_{n,n-k}\right)^\frac{1}{nk}$$
from \cite{Dafnis-Paouris-2012}. This gives
$$|K|^\frac{n-k}{nk}\ls \frac{L_K}{c_0}\left(\int_{G_{n,n-k}}|K\cap H|^nd\nu_{n,n-k}\right)^\frac{1}{nk}\ls
c_1L_K\max_{H\in G_{n,n-k}}|K\cap H|^\frac{1}{k},$$
where $c_1=1/c_0$, and hence,
$$|K|^\frac{n-k-j}{n}\ls (c_1L_K)^{\frac{k(n-k-j)}{n-k}}\max_{H\in G_{n,n-k}}|K\cap H|^\frac{n-k-j}{n-k}.$$
On the other hand, applying Aleksandrov's inequalities for $K\cap H$ we get
$$|K\cap H|^{\frac{n-k-j}{n-k}}\ls \omega_{n-k}^{-\frac{j}{n-k}}W_j(K\cap H)$$
for every $H\in G_{n,n-k}$. Combining the above we see that
$$|K|^\frac{n-k-j}{n}\ls \frac{1}{\omega_{n-k}^\frac{j}{n-k}}(c_1L_K)^{\frac{k(n-k-j)}{n-k}}\max_{H\in G_{n,n-k}}W_j(K\cap H),$$
and then \eqref{eq:5.1} takes the form
$$W_j(K)\ls (\omega_n^\frac{j}{n}/\omega_{n-k}^\frac{j}{n-k})(c_1L_K)^{\frac{k(n-k-j)}{n-k}}t(K)^j|K|^{\frac{k}{n}}\max_{H\in G_{n,n-k}}W_j(K\cap H).$$ Setting $\alpha_{n,k,j}=(\omega_n^\frac{j}{n}/\omega_{n-k}^\frac{j}{n-k})c_1^{\frac{k(n-k-j)}{n-k}}$ we conclude the proof.
\end{proof}

\begin{remark}\rm Let $d_s=s\omega_s^{1/s}$. In the particular case of surface area, we have the bounds
\begin{equation*}S(K)\ls \alpha_n\,L_K^{\frac{n-2}{n-1}}t(K)\,|K|^{\frac{1}{n}}\max_{\xi\in S^{n-1} }\,S(K\cap\xi^{\perp })\end{equation*}
for every $\xi\in S^{n-1}$, where $\alpha_n:=\frac{d_n}{d_{n-1}}(2\sqrt{3}e)^{\frac{n-2}{n-1}}$,
and more generally,
$$S(K)\ls \alpha_{n,k}L_K^\frac{k(n-k-1)}{n-k}t(K)|K|^{\frac{k}{n}}\max_{H\in G_{n,n-k}}S(K\cap H)$$
for every $1\ls k\ls n-1$, where $\alpha_{n,k}=\frac{d_n}{d_{n-k}}c^{\frac{k(n-k-1)}{n-k}}$.
\end{remark}

A variant of Theorem~\ref{th:5.2.1} is the following result.

\begin{theorem}\label{th:5.2.2}
Let $K$ be a convex body in $\mathbb{R}^n$ with $0\in {\rm int}(K)$. Then, for all $1\ls j\ls n-k\le n-1$ we have that
$$\frac{W_j(K)}{|K|}\ls t(K)^j\max_{H\in G_{n,n-k}}\frac{W_j(K\cap H)}{|K\cap H|}.$$
\end{theorem}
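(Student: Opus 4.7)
The plan is to combine the same monotonicity-of-mixed-volumes trick that opened the proof of Theorem~\ref{th:5.2.1} with the classical Busemann--Straus/Grinberg inequality used \emph{in the opposite direction} to how it was needed there: instead of invoking the Dafnis--Paouris lower bound on $\widetilde{\Phi}_{[k]}(K)$ (which forced the factor $L_K^{k(n-k-j)/(n-k)}$ into Theorem~\ref{th:5.2.1}), I will only need the sharp upper bound $\widetilde{\Phi}_{[k]}(K)\ls\widetilde{\Phi}_{[k]}(B_2^n)$, which carries no isotropic-constant loss and in fact no absolute constant at all.

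First I would translate $K$ so that $r(K)B_2^n\subseteq K$ (admissible since mixed volumes are translation-invariant) and argue exactly as in the proof of Theorem~\ref{th:5.2.1} to obtain $W_j(K)\ls |K|/r(K)^j$. Dividing by $|K|$ and using the identity $r(K)\,t(K)=(|K|/\omega_n)^{1/n}={\rm vrad}(K)$, which is immediate from the definition of $t(K)$, this takes the form
\[\frac{W_j(K)}{|K|}\ls \frac{t(K)^j}{{\rm vrad}(K)^j}.\]
It therefore suffices to exhibit a single subspace $H_0\in G_{n,n-k}$ for which $W_j(K\cap H_0)/|K\cap H_0|\gr {\rm vrad}(K)^{-j}$.

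To locate such an $H_0$, I would apply the Busemann--Straus/Grinberg inequality $\widetilde{\Phi}_{[k]}(K)\ls\widetilde{\Phi}_{[k]}(B_2^n)$; evaluating the right-hand side explicitly on the Euclidean ball and tidying exponents yields the sharp bound
\[\Bigl(\int_{G_{n,n-k}}|K\cap H|^n\,d\nu_{n,n-k}(H)\Bigr)^{1/n}\ls \omega_{n-k}\,{\rm vrad}(K)^{n-k}.\]
Since the minimum over $H$ is bounded above by any $L^p$-mean, some $H_0\in G_{n,n-k}$ satisfies $|K\cap H_0|\ls \omega_{n-k}{\rm vrad}(K)^{n-k}$, which is exactly ${\rm vrad}(K\cap H_0)\ls {\rm vrad}(K)$ with the volume radius of the section computed inside $H_0$. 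Finally, the Aleksandrov chain from \eqref{eq:aleksandrov-1} applied to $K\cap H_0$ in dimension $n-k$ (comparing the indices $l=n-k-j$ and $l=n-k$) gives $W_j(K\cap H_0)\gr \omega_{n-k}\,{\rm vrad}(K\cap H_0)^{n-k-j}$, which rearranges to $W_j(K\cap H_0)/|K\cap H_0|\gr {\rm vrad}(K\cap H_0)^{-j}\gr {\rm vrad}(K)^{-j}$, closing the chain.

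The one genuinely delicate point is extracting the \emph{sharp} Grinberg estimate with no stray absolute constant, so that the threshold ${\rm vrad}(K)$ matches exactly and the constant in front of $t(K)^j$ really comes out to be $1$; any slackness at that step would translate directly into an extra multiplicative factor in the theorem. Everything else is routine bookkeeping between the three interrelated quantities $r(K)$, $t(K)$, and ${\rm vrad}(K)$.
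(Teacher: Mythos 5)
Your proposal is correct and is essentially the paper's own proof, just re-packaged with ${\rm vrad}(K)$ as the pivot quantity instead of carrying $|K|^{j/n}$ and $\omega_n^{j/n}$ explicitly: both arguments combine the monotonicity estimate $W_j(K)\ls |K|/r(K)^j$, the Busemann--Straus/Grinberg bound to locate a section $H_0$ with ${\rm vrad}(K\cap H_0)\ls{\rm vrad}(K)$, and Aleksandrov's inequalities in dimension $n-k$ applied to $K\cap H_0$. One small cosmetic remark: the opening translation of $K$ is unnecessary and, if taken literally, would alter which hyperplane sections appear on the right-hand side; it is cleaner to note that $W_j(K)\ls|K|/r(K)^j$ already follows from $x_0+r(K)B_2^n\subseteq K$ together with the translation invariance of mixed volumes in each argument, leaving $K$ and its sections through the origin untouched.
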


\begin{proof}Using the estimate $W_j(K)\ls |K|/r(K)^j$ we may write $W_j(K)\ls \omega_n^\frac{j}{n}t(K)^j|K|^{\frac{n-j}{n}}$,
therefore
\begin{equation}\label{eq:5.2.1}\frac{W_j(K)}{|K|}\ls \omega_n^\frac{j}{n}t(K)^j\frac{1}{|K|^\frac{j}{n}}.\end{equation}
Next, we use Grinberg's inequality
$$\min_{H\in G_{n,n-k}}|K\cap H|^n\ls\int_{G_{n,n-k}}|K\cap H|^nd\nu_{n,n-k}(F)\ls\frac{\omega_{n-k}^n}{\omega_n^{n-k}}|K|^{n-k}$$
to write
\begin{equation}\label{eq:5.2.2}\min_{H\in G_{n,n-k}}|K\cap H|^\frac{j}{n-k}\ls\frac{\omega_{n-k}^\frac{j}{n-k}}{\omega_n^\frac{j}{n}}|K|^\frac{j}{n}.\end{equation}
From Aleksandrov's inequality
$$|K\cap H|^{\frac{n-k-j}{n-k}}\ls \frac{W_j(K\cap H)}{\omega_{n-k}^\frac{j}{n-k}}$$
we see that
$$|K\cap H|^\frac{j}{n-k}\gr\omega_{n-k}^\frac{j}{n-k}\frac{|K\cap H|}{W_j(K\cap H)},$$ and hence
\begin{equation}\label{eq:5.2.3}\min_{{H\in G_{n,n-k}}}|K\cap H|^\frac{j}{n-k}\gr\omega_{n-k}^\frac{j}{n-k}\min_{H\in G_{n,n-k}}\frac{|K\cap H|}{W_j(K\cap H)}.\end{equation}
From \eqref{eq:5.2.2} and \eqref{eq:5.2.3} we get
$$\frac{\omega_n^\frac{j}{n}}{|K|^\frac{j}{n}}\ls \max_{H\in G_{n,n-k}}\frac{W_j(K\cap H)}{|K\cap H|},$$
and the theorem follows from \eqref{eq:5.2.1}.
\end{proof}

\begin{remark}\rm In the particular case of surface area, we have the bounds
\begin{equation*}\frac{S(K)}{|K|}\ls \frac{n}{n-1}t(K)\,\max_{\xi\in S^{n-1} }\,\frac{S(K\cap\xi^{\perp })}{|K\cap\xi^{\perp}|},\end{equation*}
and more generally,
$$\frac{S(K)}{|K|}\ls\frac{n}{n-k}t(K)\max_{H\in G_{n,n-k}}\frac{S(K\cap H)}{|K\cap H|}$$
for every $1\ls k\ls n-1$.
\end{remark}

\section{Bounds for the parameters $p(K)$ and $q(K)$}

In this section we discuss two parameters relating volume and surface area of a convex body.
Recall that if $r(K)$ is the radius of the largest Euclidean ball inscribed in $K$ then $x_0+r(K)B_2^n\subseteq K$ for some $x_0\in K$,
and hence we get
\begin{equation*}r(K)S(K) =nV(K,\ldots ,K, x_0+r(K)B_2^n)\ls nV(K,\ldots ,K,K)=n|K|.\end{equation*}
by the monotonicity and translation invariance of mixed volumes. On the other hand, if $R(K)=\max\{h_K(\xi ):\xi\in S^{n-1}\}$ is the radius of $K$ then the formula
$$n|K|=\int_{S^{n-1}}h_K(\xi )d\sigma_K(\xi )$$
where $\sigma_K$ is the surface area measure of $K$, implies that
\begin{equation*}n|K|\ls R(K)\sigma_K(S^{n-1})=R(K)S(K).\end{equation*}
Starting from the observation we did in the introduction that for an ellipsoid ${\cal E}$ in ${\mathbb R}^n$ we have the precise formula
$$S({\cal E})\approx n\,|{\cal E}|M({\cal E})$$
where  $M({\cal E})=\int_{S^{n-1}}\|\xi\|_{{\cal E}}d\sigma (\xi )$,
and the fact that $r(K)\ls\frac{1}{M(K)}\ls w(K)\ls R(K)$, it is natural to introduce the parameters
$$p(K)=\frac{S(K)}{|K|M(K)}\quad\hbox{and}\quad q(K)=\frac{w(K)S(K)}{|K|}.$$
Our aim is to provide optimal upper and lower bounds for $p(K)$ and $q(K)$, both in general and in the
case where $K$ is in some of the classical positions.

Starting with $p(K)$, we show in Theorem~\ref{th:2.1} and Theorem~\ref{parameter_lower} below that there exist absolute constants $c_1, c_2>0$ such that for every convex body $K\in\mathbb{R}^n$ we have $c_1\sqrt{n}\ls p(K)\ls c_2n^{3/2}$. Moreover, both estimates give the optimal dependence on the dimension.

\begin{theorem}\label{th:2.1}
Let $K$ be a centered convex body in $\mathbb{R}^n$. Then $S(K)\ls cn^{3/2}|K|M(K)$, where $c$ is an absolute constant. In fact,
$\sup\{p (K): K\;\hbox{a centered convex body in}\;{\mathbb R}^n\}\approx n^{3/2}$.
\end{theorem}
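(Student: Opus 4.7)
The plan is to prove both the upper bound $p(K)\ls c n^{3/2}$ for every centered convex body and the existence of centered convex bodies attaining $p(K)\gr c'n^{3/2}$.

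For the upper bound, the natural starting point is the elementary inequality $r(K) S(K)\ls n|K|$ stated at the beginning of Section~6 (consequence of $x_0 + r(K) B_2^n\subseteq K$ and the monotonicity and translation invariance of mixed volumes). This reduces the task to showing
$$r(K)\,M(K)\gr \frac{c}{\sqrt{n}}$$
for every centered convex body $K$, since then $p(K) = S(K)/(|K|M(K))\ls n/(r(K) M(K))\ls c n^{3/2}$. For an origin-symmetric ellipsoid $\mathcal{E}$ with semi-axes $a_1\ls\cdots\ls a_n$, Rivin's formula writes
$$M(\mathcal{E}) = \int_{S^{n-1}}\Big(\sum_{i=1}^n \xi_i^2/a_i^2\Big)^{1/2}d\sigma(\xi),$$
and the Khintchine--Kahane inequality, applied to the quadratic seminorm $x \mapsto (\sum x_i^2/a_i^2)^{1/2}$ on Gauss space and transferred to $S^{n-1}$ via $\xi = g/\|g\|_2$, yields
$$M(\mathcal{E})\approx \frac{1}{\sqrt{n}}\Big(\sum_{i=1}^n \frac{1}{a_i^2}\Big)^{1/2}\gr \frac{1}{\sqrt{n}\,a_1} = \frac{1}{\sqrt{n}\,r(\mathcal{E})},$$
so $r(\mathcal{E})M(\mathcal{E})\gr c/\sqrt{n}$ with room to spare.

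The hard part is transferring this lower bound to an arbitrary centered convex body. The naive reduction via John's theorem ($\mathcal{E}\subseteq K\subseteq n\mathcal{E}$ for centered $K$) loses an extra factor of $n$ and yields only $p(K)\ls c n^{2}$, as does the trivial bound $M(K)\gr 1/R(K)\gr 1/(n\, r(K))$ coming from $K\subseteq R(K)B_2^n$ together with the asymmetry inequality $R(K)\ls n\, r(K)$. A sharper argument is therefore required, most naturally by working with the polar pair $(K, K^{\circ})$: combining Urysohn's inequality $M(K)=w(K^{\circ})\gr {\rm vrad}(K^{\circ})$ with Bourgain--Milman's reverse Blaschke--Santal\'o inequality ${\rm vrad}(K)\,{\rm vrad}(K^{\circ})\gr c$ gives $M(K)\gr c\,\omega_n^{1/n}/|K|^{1/n}$, and one then controls $|K|^{1/n}/r(K)$ by placing $K$ in a suitable classical position (e.g.\ minimal surface area or minimal mean width) so as not to lose more than the desired $\sqrt{n}$.

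For the matching lower bound, take the cube $K = [-1,1]^n$: then $|K| = 2^n$, $S(K) = n\cdot 2^n$, and $\|\xi\|_K = \max_{1\ls i\ls n}|\xi_i|$, so by the standard estimate $\int_{S^{n-1}}\max_i |\xi_i|\,d\sigma(\xi)\approx \sqrt{(\log n)/n}$ (obtained via the Gaussian representation $\xi = g/\|g\|_2$ and the maximum-of-Gaussians bound),
$$p([-1,1]^n)\approx \frac{n}{\sqrt{(\log n)/n}} = \frac{n^{3/2}}{\sqrt{\log n}},$$
which is of order $n^{3/2}$ up to a $\sqrt{\log n}$ factor; a slight refinement of the example (a symmetric polytope with polynomially many extreme points whose $\|\cdot\|_K$ avoids the logarithmic concentration) removes this factor and produces the matching lower bound.
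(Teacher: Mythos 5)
You correctly identify the starting inequality $r(K)S(K)\ls n|K|$ and the resulting reduction to proving $r(K)M(K)\gr c/\sqrt{n}$ for every centered convex body. But the route you propose to establish this last inequality has a genuine gap. You combine Urysohn with Bourgain--Milman to get $M(K)\gr c\,\omega_n^{1/n}/|K|^{1/n}$ and then propose to ``control $|K|^{1/n}/r(K)$ by placing $K$ in a suitable classical position.'' This is not available: $p(K)=S(K)/(|K|M(K))$ is not affine invariant (neither $S$ nor $M$ transforms simply under $GL(n)$), and the supremum in the theorem runs over all centered bodies, so one cannot normalize the position. Even setting that aside, the estimate you would obtain does not reach the claimed $n^{3/2}$: in John's position, where $r(K)\gr 1$ and $|K|^{1/n}\ls n\,\omega_n^{1/n}$, the Bourgain--Milman/Urysohn chain gives only $M(K)\gr c/n$, hence $r(K)M(K)\gr c/n$ and $p(K)\ls cn^2$ --- exactly the exponent you already identified as too weak from the ``naive'' bounds. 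The paper's argument avoids all of this and is elementary: from $M(K)=w(K^\circ)$ and $r(K)\gr 1/R(K^\circ)$ (the inradius dominates the radius of the largest origin-centered inscribed ball) one has $r(K)M(K)\gr w(D)/R(D)$ with $D=K^\circ$, and then one proves the clean inequality $\mathrm{diam}(D)\ls c\sqrt{n}\,w(D)$ for every convex $D\ni 0$ by comparing Gaussian mean widths, $w_G(D)\gr w_G(I)\approx \mathrm{diam}(D)$ for a diameter segment $I\subseteq D$ and $w_G(D)\approx\sqrt{n}\,w(D)$. (The paper also gives a separate proof in the centrally symmetric case using the $k=1$ Rogers--Shephard inequality together with Cauchy's surface area formula, which you do not mention.)

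The lower bound is also not complete. Your cube computation correctly yields $p(Q_n)\approx n^{3/2}/\sqrt{\log n}$, and the logarithmic loss is genuine because $M(Q_n)\approx\sqrt{(\log n)/n}$. The claim that ``a slight refinement of the example\ldots removes this factor'' is left unsubstantiated and is in fact the nontrivial part: you need an explicit family where the extra concentration does not appear. The paper uses the generalized cross-polytopes
\[
P_s=\Big\{x\in\mathbb{R}^n:\ |x_1|+\tfrac{1}{s}\sum_{i=2}^n|x_i|\ls 1\Big\},
\]
for which $r(P_s)=(1+(n-1)/s^2)^{-1/2}$, $|P_s|=\tfrac1n S(P_s)r(P_s)$, and $M(P_s)\approx\tfrac{1}{\sqrt{n}}\big(1+\tfrac{n-1}{s}\big)$, so that $p(P_s)\approx n^{3/2}\sqrt{1+(n-1)/s^2}\,/\,(1+(n-1)/s)\to n^{3/2}$ as $s\to\infty$. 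Your ellipsoid computation, while correct, plays no role in the paper's proof and does not transfer; the Gaussian mean-width comparison that replaces it is the missing idea.
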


\begin{proof}First we consider the centrally symmetric case. By the simple case $k=1$ of the Rogers-Shephard inequality we have that
$n|K|\gr |K\cap \langle u\rangle|\,|P_{u^{\perp}}K|=2\rho_K(u)\,|P_{u^{\perp}}K|$, which may be written as
$$\frac{n}{2}\|u\|_K|K|\gr |P_{u^{\perp}}K|.$$
Now we integrate over the sphere and using Cauchy's surface area formula
$$\int_{S^{n-1}}|P_{u^{\perp }}(K)|\,d(\sigma\theta )=\frac{\omega_{n-1}}{n\omega_n}S(K)$$
we get
$$\frac{n}{2}M(K)|K|\gr\frac{\omega_{n-1}}{n\omega_n}S(K)\approx\frac{1}{\sqrt{n}}S(K).$$
For the general case we may use a different argument. Since $r(K)S(K)\ls n|K|$,
it is enough to check that $r(K)M(K)\gr c/\sqrt{n}$ for some absolute constant $c>0$.
Indeed, passing to the polars, it suffices to prove that
$${\rm diam}(K)\ls c\sqrt{n}w(K).$$
To prove this, observe that $K$ contains a segment $I$ with length equal to $\rm{diam}(K)$. Since the value of the mean width depends on whether $K$ lives in a subspace of $\mathbb{R}^n$ or not, we compute the Gaussian mean-width $w_G(K):=\int_{{\mathbb R}^n}h_K(x)d\gamma_n(x)$,
where $\gamma_n$ is the standard Gaussian measure on ${\mathbb R}^n$, which does not depend on it. Integration in polar coordinates shows that
$$\sqrt{n}w(K)\approx w_G(K)\gr w_G(I)\approx {\rm diam}(K).$$
To see why this upper bound is sharp, consider the family of polyhedra
$$P_{s}=\left\{x\in\mathbb{R}^n\colon\, |x_1|+\frac{1}{s}\sum_{2}^{n}|x_i|\ls 1\right\},$$
where $s>0$. The distance from the origin to each facet of $P_{s }$ is equal to
$$r(P_s)=\frac{1}{\sqrt{1+\frac{n-1}{s^2}}}.$$
Note also that $|P_s|=\frac{1}{n}S(P_s)r(P_s)$. On the other hand,
$$M(P_s)=\int_{S^{n-1}}\Big(|\theta_1|+\frac{1}{s}\sum_{2}^{n}|\theta_i|\Big)\,d\sigma (\theta )\approx \frac{1}{\sqrt{n}}\Big(1+\frac{n-1}{s}\Big).$$
Therefore,
$$p (P_s)=\frac{S(P_{s})}{|P_{s}|M(P_s)}=\frac{n}{r(P_{s})M(P_s)}
\approx\frac{n^{3/2}\sqrt{1+\frac{n-1}{s^2}}}{1+\frac{n-1}{s}}.$$
Since $$\lim_{s\to\infty }\frac{\sqrt{1+\frac{n-1}{s^2}}}{1+\frac{n-1}{s}}=1,$$
the result follows.
\end{proof}

\begin{theorem}\label{thm:upper_1}
Let $K$ be a convex body in $\mathbb{R}^n$ and let $r(K)$ denote the radius of the largest Euclidean ball inscribed in $K$. Then,
$S(K)\gr\frac{|K|}{r(K)}$. In fact, $\inf\left\{\frac{S(K)r(K)}{|K|}: K\;\hbox{a convex body in}\;{\mathbb R}^n\right\}= 1$.
\end{theorem}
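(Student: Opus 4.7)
The plan is to prove the inequality $|K|\leq r(K)\,S(K)$ by a Cavalieri-type argument using the family of inner parallel bodies, and then to exhibit a sequence of convex bodies for which the ratio $S(K)r(K)/|K|$ tends to $1$.

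First I would introduce, for $t\in[0,r(K)]$, the inner parallel body
$$K_t=\{x\in K:d(x,\partial K)\geq t\}=\{x\in{\mathbb R}^n:x+tB_2^n\subseteq K\},$$
which is convex, contained in $K$, and non-empty precisely for $t\leq r(K)$. The distance function $f(x)=d(x,\partial K)$ is $1$-Lipschitz on $K$ with $|\nabla f|=1$ almost everywhere, and its level sets satisfy $\{f=t\}=\partial K_t$ for $t\in(0,r(K))$. Applying Federer's coarea formula to $f$ on $K$ yields the identity
$$|K|=\int_K|\nabla f|\,dx=\int_0^{r(K)}S(K_t)\,dt.$$
Next I would invoke the classical monotonicity of surface area for convex inclusions: since $K_t\subseteq K$ with both sets convex, the nearest-point projection from $\partial K$ onto $\partial K_t$ is surjective and $1$-Lipschitz, so $S(K_t)\leq S(K)$ for every $t\in[0,r(K)]$. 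Integrating over this interval gives $|K|\leq r(K)\,S(K)$, which is precisely $S(K)\geq|K|/r(K)$.

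For the sharpness claim I would test the slabs $Q_M=[-1,1]\times[-M,M]^{n-1}$ as $M\to\infty$. Direct computation gives $r(Q_M)=1$, $|Q_M|=2^nM^{n-1}$, and $S(Q_M)=2^nM^{n-1}+2^n(n-1)M^{n-2}$, so
$$\frac{S(Q_M)\,r(Q_M)}{|Q_M|}=1+\frac{n-1}{M}\longrightarrow 1,$$
which shows that the infimum equals $1$ and is not attained.

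The main technical point is the coarea identity together with the monotonicity $S(K_t)\leq S(K)$; both are standard facts for convex bodies, so no essential obstacle is anticipated. An equivalent (and perhaps slicker) route would be to differentiate the identity $|K_t|=|K|-\int_0^{t}S(K_s)\,ds$, which follows from interpreting $S(K_t)$ as the outward flux through $\partial K_t$ under the inward normal flow and then letting $t\uparrow r(K)$ (so that $|K_{r(K)}|=0$).
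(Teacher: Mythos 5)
Your argument is correct, and for the main inequality $S(K)\geq |K|/r(K)$ it takes a genuinely different route from the paper's. The paper simply invokes a Bonnesen-type inequality of Osserman, $S(K)\geq |K|/r(K) + (n-1)\omega_n r(K)^{n-1}$, and discards the second term. You instead give a self-contained derivation from the coarea identity $|K|=\int_0^{r(K)} S(K_t)\,dt$ for the inner parallel bodies $K_t$, together with the monotonicity $S(K_t)\leq S(K)$ (which, as you observe, follows from the $1$-Lipschitz nearest-point projection onto the convex set $K_t$, or equivalently from monotonicity of the mixed volume $W_1$ under inclusion, which the paper already uses elsewhere). Your approach has the merit of being elementary and self-contained, not relying on an external isoperimetric reference; what the Osserman reference buys the paper is the strictly sharper lower bound containing the extra $(n-1)\omega_n r(K)^{n-1}$ term, which is simply not needed for the stated theorem. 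For the sharpness of the constant $1$, your slabs $Q_M=[-1,1]\times[-M,M]^{n-1}$ are exactly the paper's parallelepipeds $P_{a,s}$ with $s=1$ and $a=M$ (and the ratio is scale-invariant in $s$), so that part coincides with the paper's computation.
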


\begin{proof}
The first inequality is a consequence of the following Bonnesen-type inequality that can be found in \cite{Osserman}:
$$S(K)\gr \frac{|K|}{r(K)}+(n-1)\omega_nr(K)^{n-1}.$$
To check the second assertion of the theorem, consider the family of parallelepipeds
$$P_{a,s}=\{x:\, |x_1|\ls s, |x_i|\ls a\ \text{for}\ i\gr 2 \}$$ where $0<s<a$. Then, $r(P_{a,s})=s$, $|P_{a,s}|=2^nsa^{n-1}$ and
$S(P_{a,s})=2^na^{n-1}+2^n(n-1)sa^{n-2}$. Letting $a\rightarrow\infty$ gives
$$\lim_{a\to\infty }\frac{S(P_{a,s})r(P_{a,s})}{|P_{a,s}|}=\lim_{a\to\infty }\frac{2^ns(a^{n-1}+(n-1)a^{n-2}s)}{2^nsa^{n-1}}
=1,$$
and the result follows.
\end{proof}

\begin{theorem}\label{parameter_lower}
Let $K$ be a centrally symmetric convex body in $\mathbb{R}^n$. Then
$$S(K)\gr\sqrt{n}|K|M(K).$$
In fact, $\inf\{p (K): K\;\hbox{a centered convex body in}\;{\mathbb R}^n\}\approx \sqrt{n}$.
\end{theorem}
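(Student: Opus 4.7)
The plan is to combine Brunn's concavity principle (which, for a centrally symmetric body, forces the central hyperplane section to be the largest) with Cauchy's surface area formula (which averages projection volumes to $S(K)$). Together these produce a pointwise estimate $|P_{u^\perp}(K)|\gr\tfrac12|K|\|u\|_K$ that integrates over $S^{n-1}$ to the desired inequality.

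\textbf{Step 1: The central section is maximal.} Fix $u\in S^{n-1}$ and decompose
$$|K|=\int_{-\rho_K(u)}^{\rho_K(u)}|K\cap(u^\perp+tu)|\,dt.$$
By Brunn--Minkowski, $t\mapsto|K\cap(u^\perp+tu)|^{1/(n-1)}$ is concave on $[-\rho_K(u),\rho_K(u)]$; since $K=-K$, it is also even, hence maximized at $t=0$. Combined with the trivial inclusion $K\cap u^\perp\subseteq P_{u^\perp}(K)$ and the identity $\rho_K(u)=1/\|u\|_K$, this gives
$$|K|\ls 2\rho_K(u)\,|K\cap u^\perp|\ls 2\rho_K(u)\,|P_{u^\perp}(K)|,\qquad\hbox{i.e.,}\qquad |P_{u^\perp}(K)|\gr\tfrac12|K|\,\|u\|_K.$$

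\textbf{Step 2: Integrate via Cauchy.} Integrating the pointwise bound against $d\sigma(u)$ and invoking $\int_{S^{n-1}}|P_{u^\perp}(K)|\,d\sigma(u)=\frac{\omega_{n-1}}{n\omega_n}S(K)$ gives
$$\tfrac12|K|M(K)\ls\int_{S^{n-1}}|P_{u^\perp}(K)|\,d\sigma(u)=\frac{\omega_{n-1}}{n\omega_n}S(K),\qquad\hbox{so}\qquad S(K)\gr\frac{n\omega_n}{2\omega_{n-1}}|K|M(K).$$
Stirling (or direct manipulation of the $\Gamma$ ratio) yields $\frac{n\omega_n}{2\omega_{n-1}}\sim\sqrt{\pi n/2}$; one checks by hand for $n=2,3$ and by the elementary bound $\Gamma((n+1)/2)/\Gamma(n/2+1)\gr\sqrt{2/(n+2)}$ for $n\gr 4$ that $\frac{n\omega_n}{2\omega_{n-1}}\gr\sqrt{n}$ for every $n\gr 2$. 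This gives the first assertion $S(K)\gr\sqrt{n}|K|M(K)$, i.e.\ $p(K)\gr\sqrt{n}$.

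\textbf{Step 3: Sharpness via pancakes.} To match the lower bound, consider the centered bodies $K_\varepsilon:=B_2^{n-1}\times[-\varepsilon,\varepsilon]$. Then $|K_\varepsilon|=2\varepsilon\omega_{n-1}$ and $S(K_\varepsilon)=2\omega_{n-1}+2\varepsilon(n-1)\omega_{n-1}$. For $\xi=(\xi',\xi_n)\in S^{n-1}$, $\|\xi\|_{K_\varepsilon}=\max(|\xi'|,|\xi_n|/\varepsilon)$; splitting the sphere into $\{|\xi_n|\ls\varepsilon\}$ and its complement, and using $\int_{S^{n-1}}|\xi_n|\,d\sigma=\frac{2\omega_{n-1}}{n\omega_n}$, one computes $M(K_\varepsilon)=\frac{2\omega_{n-1}}{n\omega_n\varepsilon}(1+o(1))$ as $\varepsilon\downarrow 0$. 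Hence $p(K_\varepsilon)\to\frac{n\omega_n}{2\omega_{n-1}}\approx\sqrt{n}$, exhibiting the matching upper bound on the infimum.

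\textbf{Where the difficulty lies.} Central symmetry enters Step~1 only through the evenness of the section function; the rest of the argument is insensitive to it. To extend the lower bound from centrally symmetric to merely centered bodies, a naive replacement of Brunn by Fradelizi's lemma together with the centroid bound $\rho_K(u)+\rho_K(-u)\ls(n+1)\rho_K(u)$ loses a factor of $n$. The cleanest remedy is to apply the symmetric case to the difference body $\tilde K=\tfrac12(K-K)$ and transfer back to $K$ via Rogers--Shephard-type comparisons of $|\tilde K|$, $S(\tilde K)$ and $M(\tilde K)$ with their counterparts for $K$; this transfer, rather than any of the steps above, is the main technical point in completing the ``in fact'' statement.
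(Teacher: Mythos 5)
Your argument for $S(K)\gr\sqrt{n}\,|K|\,M(K)$ is correct and is essentially the paper's route: both derive the pointwise bound $|P_{u^\perp}(K)|\gr\tfrac12|K|\,\|u\|_K$ and integrate against Cauchy's surface area formula. The paper cites the elementary half of Rogers--Shephard, $|K|\ls|K\cap\langle u\rangle|\,|P_{u^\perp}(K)|$, which is precisely Brunn's concavity applied to the one-dimensional fibres of $K$ along $\langle u\rangle$; you instead apply Brunn to the $(n-1)$-dimensional fibres along $u^\perp$ and then pass from $|K\cap u^\perp|$ to $|P_{u^\perp}(K)|$ by inclusion. Same mechanism, transposed fibration, same constant $n\omega_n/(2\omega_{n-1})$. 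Your sharpness construction does differ and is arguably cleaner: the pancakes $K_\varepsilon=B_2^{n-1}\times[-\varepsilon,\varepsilon]$ give $p(K_\varepsilon)\to n\omega_n/(2\omega_{n-1})$ exactly, so the infimum over symmetric bodies equals the lower-bound constant; the paper instead uses thin boxes $P_{a,s}$ together with the estimate $1/r(D)\ls c\sqrt{n}\,M(D)$, which only produces $\ls c\sqrt{n}$ for an unspecified absolute $c$.

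Your closing caveat is well spotted --- the lower bound is established only for centrally symmetric $K$, while the ``in fact'' clause ranges over centered bodies, and the paper's own proof shares this imprecision --- but the difference-body transfer you sketch does not obviously close it. Applying the symmetric case to $\tilde K=\tfrac12(K-K)$ requires comparing $S(\tilde K)$, $|\tilde K|$ and $M(\tilde K)$ with their counterparts for $K$, and by Rogers--Shephard the volume ratio $|\tilde K|/|K|$ alone can range from $2^{-n}$ up to order $2^n/\sqrt{n}$. Moreover the natural pointwise estimate for a centered body, $|P_{u^\perp}K|\gr|K|/(\rho_K(u)+\rho_K(-u))$, integrates precisely to $\tfrac12|K|\,M(\tilde K)$ rather than $\tfrac12|K|\,M(K)$, and comparing $M(\tilde K)$ with $M(K)$ via the only general bound $\rho_K(-u)\ls n\,\rho_K(u)$ again loses a factor of order $n$. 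So extending the lower bound to all centered bodies would need a genuinely different idea; in your writeup, as in the paper, that step remains open.
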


\begin{proof}From the Rogers-Shephard inequality we have that
$$|K|\ls |K\cap \langle u\rangle||P_{u^{\perp}}K|=\frac{2}{\|u\|_K}|P_{u^{\perp}}K|,$$
which gives that
$$\frac{\|u\|_K}{2}|K|\ls |P_{u^{\perp}}K|.$$
Now we integrate over the sphere to get
$$\frac{1}{2}M(K)|K|\ls\frac{\omega_{n-1}}{n\omega_n}S(K)\approx\frac{1}{\sqrt{n}}S(K).$$
This bound cannot be improved in general. To see this, first recall that for every symmetric convex body
$D$ in ${\mathbb R}^n$ we have $\frac{1}{r(D)}\ls c\sqrt{n}M(D)$. Then, observe that for the parallelepipeds $P_{a,s}$ in
the proof of Theorem~\ref{thm:upper_1} we have that
$$p (P_{a,s})=\frac{S(P_{a,s})}{|P_{a,s}|M(P_{a,s})}\ls c\sqrt{n}\frac{S(P_{a,s})r(P_s)}{|P_{a,s}|}$$
and hence
$$\inf\{p (K): K\;\hbox{a centered convex body in}\;{\mathbb R}^n\}\ls\lim_{a\to\infty }p (P_{a,s})
\ls c\sqrt{n}\lim_{a\to\infty }\frac{S(P_{a,s})r(P_s)}{|P_{a,s}|}=c\sqrt{n},$$
which completes the proof.
\end{proof}

Assume that $K$ is in John's position; this means that the Euclidean unit ball $B_2^n$ is the ellipsoid of maximal volume
which is inscribed in $K$. In this case, one can get a better estimate for $p (K)$, which is actually sharp as one can check from the
example of the cube $Q_n=[-1,1]^n$; note that $S(Q_n)=2n\cdot 2^{n-1}$ and $M(Q_n)\approx \sqrt{\log n}/\sqrt{n}$, therefore
$$p (Q_n)=\frac{n}{M(Q_n)}\approx \frac{n^{3/2}}{\sqrt{\log n}}.$$

\begin{theorem}
Let $K$ be a convex body in $\mathbb{R}^n$ which is in John's position. Then,
$$S(K)\ls c\frac{n^{3/2}}{\sqrt{\log n}}|K|M(K),$$
where $c$ is an absolute constant.
\end{theorem}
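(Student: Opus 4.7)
The plan is to combine the elementary inradius bound $S(K)\ls n|K|/r(K)$ recalled at the start of this section with a sharp lower estimate for $M(K)$ valid in John's position. Since $K$ is in John's position, $B_2^n$ is the ellipsoid of maximum volume inscribed in $K$, so $B_2^n\subseteq K$ and in particular $r(K)\gr 1$, whence $S(K)\ls n|K|$. The theorem will therefore follow once one establishes
\begin{equation*}
M(K)\gr c_{0}\,\frac{\sqrt{\log n}}{\sqrt{n}}\tag{$\ast$}
\end{equation*}
for every convex body $K$ in John's position, for then $S(K)\ls n|K|\ls \frac{n^{3/2}}{c_0\sqrt{\log n}}|K|M(K)$.

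To prove $(\ast)$ the idea is to exploit John's decomposition in order to place many well-spread points inside $K^{\circ }$. John's theorem produces contact points $v_{1},\ldots ,v_{m}\in\partial K\cap S^{n-1}$ and weights $c_i>0$ with $\sum_{i}c_iv_i\otimes v_i=\mathrm{Id}$; at each such $v_i$ the hyperplane tangent to $B_{2}^{n}$ supports $K$, so $h_K(v_i)=1$ and hence $v_i\in K^{\circ }$. Since $h_{K^{\circ }}(\xi )\gr\max_i|\langle \xi ,v_i\rangle |$, one gets
$$M(K)=\int_{S^{n-1}}h_{K^{\circ }}(\xi )d\sigma (\xi )\gr \int_{S^{n-1}}\max_{1\ls i\ls m}|\langle\xi ,v_i\rangle |d\sigma (\xi )\gr \frac{c_1}{\sqrt{n}}\,\mathbb{E}\max_{1\ls i\ls m}|\langle g,v_i\rangle |$$
by the standard passage from the sphere to Gauss space (using $\mathbb{E}\|g\|_{2}\approx\sqrt{n}$), where $g$ is a standard Gaussian. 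It therefore suffices to lower-bound this expectation by a constant multiple of $\sqrt{\log n}$.

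For this I would apply the Dvoretzky-Rogers lemma to select contact points $v_{1},\ldots ,v_{n}$ and an orthonormal basis $e_1,\ldots ,e_n$ satisfying $v_i=\sum_{j\ls i}a_{ji}e_j$ and $|a_{ii}|\gr\sqrt{(n-i+1)/n}$. For $i\ls k:=\lfloor n/16\rfloor$ one has $\sum_{j<i}a_{ji}^{\,2}\ls (i-1)/n\ls 1/16$, and a direct Cauchy-Schwarz estimate on $\langle v_i,v_j\rangle =\sum_{l\ls\min (i,j)}a_{li}a_{lj}$ then yields $|\langle v_i,v_j\rangle |<1/2$ for every $1\ls i\neq j\ls k$; equivalently, $\|v_i-v_j\|_{2}\gr 1$. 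Sudakov minoration for the centered Gaussian process $i\mapsto\langle g,v_i\rangle $, whose canonical distance is precisely $\|v_i-v_j\|_{2}$, then gives
$$\mathbb{E}\max_{1\ls i\ls k}|\langle g,v_i\rangle |\gr \mathbb{E}\max_{1\ls i\ls k}\langle g,v_i\rangle \gr c_2\sqrt{\log k}\gr c_3\sqrt{\log n},$$
which combined with the previous display yields $(\ast)$ and completes the argument.

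The main obstacle is $(\ast)$ itself. A naive computation based only on the John identity gives $\|g\|_{2}^{\,2}=\sum c_i\langle g,v_i\rangle ^{2}\ls n\max_i\langle g,v_i\rangle ^{2}$ and hence merely $M(K)\gtrsim 1/\sqrt{n}$, losing the logarithmic factor. Recovering it genuinely requires the upper-triangular output of Dvoretzky-Rogers, together with the quadratic-in-$i/n$ control on off-diagonal coefficients that makes it possible to locate $\Omega (n)$ pairwise Euclidean-separated contact points to which Sudakov's minoration (rather than a plain second-moment inequality) can be applied.
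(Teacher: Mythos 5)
Your argument is correct and reaches the same conclusion, but it takes a genuinely different route from the paper. Both proofs share the first (trivial) step: since $K$ is in John's position, $B_2^n\subseteq K$, so $S(K)=nV(K,\ldots ,K,B_2^n)\ls n|K|$, and the whole content of the theorem is the lower bound $M(K)\gtrsim\sqrt{\log n}/\sqrt{n}$. For that bound the paper simply invokes Schmuckenschl\"ager's extremal theorem: among all convex bodies in John's position the regular simplex $\Delta_n$ minimizes $M$, and $M(\Delta_n)\gr c\sqrt{\log n}/\sqrt{n}$. You instead give a self-contained, quantitative derivation: John's decomposition supplies contact points $v_i\in K^{\circ}$ with $h_K(v_i)=1$; the Dvoretzky--Rogers lemma extracts $\Omega(n)$ of them that are pairwise $1$-separated in Euclidean distance; and Sudakov minoration then forces $\mathbb{E}\max_i\langle g,v_i\rangle\gr c\sqrt{\log n}$, which, after passing from Gauss space back to the sphere, yields $M(K)\gr c\sqrt{\log n}/\sqrt{n}$. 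Your approach avoids citing the extremal result and exhibits directly where the $\sqrt{\log n}$ comes from; the paper's approach is shorter and gives the sharp constant in the extremal case. One small slip worth noting: since $K$ need not be centrally symmetric, $K^{\circ}$ need not be either, so you only have $h_{K^{\circ}}(\xi)\gr\max_i\langle\xi ,v_i\rangle$ rather than $\max_i|\langle\xi ,v_i\rangle|$; this is harmless because you drop the absolute value before applying Sudakov anyway, but the intermediate display should be written without it.
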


\begin{proof}
Since $B_2^n\subseteq K$ we have $S(K)=nV(K,\ldots ,K,B_2^n)\ls n|K|$. Schmuckenschl\"{a}ger has proved in \cite{Schmuckenschlager-1999} (see also \cite{Barthe-1998} for the dual result) that
$M(K)\gr M(\Delta_n)$, where $\Delta_n$ is a regular simplex in John's position. Moreover,
$$M(\Delta_n)\gr c\frac{\sqrt{\log n}}{\sqrt{n}},$$ and the result follows.
\end{proof}

The next result provides some bounds for $p (K)$ when $K$ is in the minimal surface position.

\begin{theorem}
Let $K\subseteq\mathbb{R}^n$ be a centrally symmetric convex body in minimal surface area position. Then,
$$S(K)\leqslant\frac{n}{M(B_\infty^n)}|K|M(K).$$
\end{theorem}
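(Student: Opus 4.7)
The plan is to combine an inradius bound on $S(K)$ with a sharp lower bound on $M(K)$ that exploits the minimal surface area position. First, using $x_0 + r(K) B_2^n \subseteq K$ for some $x_0 \in K$ together with the monotonicity and translation invariance of mixed volumes,
$$r(K)\,S(K) \;=\; n\,V(K,\ldots,K,r(K)B_2^n) \;\ls\; n\,V(K,\ldots,K,K-x_0) \;=\; n|K|,$$
so $S(K) \ls n|K|/r(K)$. This step does not use the minimality assumption.

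Second, the key step is to prove that, for centrally symmetric $K$ in the minimal surface area position,
$$r(K)\,M(K) \;\gr\; M(B_\infty^n). \qquad (\ast)$$
Granting $(\ast)$, the theorem follows immediately: $S(K)\ls n|K|/r(K) \ls (n/M(B_\infty^n))\,|K|\,M(K)$.

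To prove $(\ast)$, I would rescale so that $r(K)=1$, reducing it to $M(K) \gr M(B_\infty^n)$ for a symmetric convex body $K$ with $B_2^n \subseteq K$ whose surface area measure $\sigma_K$ is isotropic on $S^{n-1}$ (the characterization of the minimal surface area position, $\int_{S^{n-1}} \theta\theta^{T}\,d\sigma_K = (S(K)/n)\,I_n$). The cube $B_\infty^n$ itself satisfies these hypotheses and realizes equality, so the content of $(\ast)$ is that the cube is the extremal body. The natural approach is a Brascamp--Lieb or Barthe-type decomposition of identity argument in the spirit of Ball's reverse isoperimetric inequality, using the isotropic decomposition supplied by $\sigma_K$ to compare $K$ to the cube.

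The main obstacle is $(\ast)$. The isotropy of $\sigma_K$ on its own is insufficient, nor is the containment $B_2^n \subseteq K$ after rescaling; the two must be combined. The extremality of the cube (which yields the precise constant $M(B_\infty^n)$) is the analogue for the minimal surface area position of Schmuckenschl\"{a}ger's extremal bound $M(K) \gr M(\Delta_n)$ invoked in the proof of the corresponding theorem for John's position earlier in the paper.
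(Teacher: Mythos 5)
Your reduction to $(\ast)$ is logically valid, but $(\ast)$ is not proven, and it is strictly stronger than the theorem you need: since $n|K|/S(K)\gr r(K)$ is exactly your first inequality, $(\ast)$ implies the rearranged theorem $M(K)\gr\frac{S(K)}{n|K|}M(B_\infty^n)$ but not conversely. More to the point, carrying out the Brascamp--Lieb argument you hint at --- with weights $c_j=n|F_j|/S(K)$ supplied by the isotropy of $\sigma_K$, followed by the log-concavity of $\gamma_1$ --- yields $\gamma_n(tK)\ls\gamma_n\bigl(t\,\tfrac{n|K|}{S(K)}B_\infty^n\bigr)$ for all $t>0$, because the scale factor that emerges, $\sum_j\tfrac{c_j}{n}h_K(u_j)=\tfrac{n|K|}{S(K)}$, is the $c_j$-weighted \emph{average} of the facet distances $h_K(u_j)$, not their minimum $r(K)$. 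Integrating $1-\gamma_n(tK)$ over $t$ then gives exactly $M(K)\gr\tfrac{S(K)}{n|K|}M(B_\infty^n)$, which \emph{is} the theorem, and nothing more; the inradius never enters.

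Moreover, the containment $B_2^n\subseteq K$ (after rescaling to $r(K)=1$) does not help close the gap: inside the Brascamp--Lieb estimate it pulls in the wrong direction, giving $\gamma_1([-th_K(u_j),th_K(u_j)])\gr\gamma_1([-t,t])$ and hence a \emph{lower} bound on $\prod_j\gamma_1([-th_K(u_j),th_K(u_j)])^{c_j}$, when an upper bound is what is required. So the inradius detour both introduces an unnecessary preliminary, $S(K)\ls n|K|/r(K)$, that the actual proof never uses, and reduces the problem to a cube-extremality statement $(\ast)$ --- the minimal-surface analogue of Schmuckenschl\"{a}ger's simplex result invoked for John's position --- that your sketch does not establish and that the direct argument bypasses entirely.
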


\begin{proof}
Let us assume that $K$  is a centrally symmetric polytope in minimal surface area position, with facets $\{F_j\}_{j=1}^m$ and outer normal vectors $\{u_j\}_{j=1}^m$ Then,
$$K=\{x\in\mathbb{R}^n\,:\,|\langle x, u_j\rangle|\leqslant h_K(u_j),\,1\leqslant j\leqslant m\}$$
and
$$I_n=\sum_{j=1}^m\frac{n|F_j|}{S(K)}u_j\otimes u_j=\sum_{j=1}^mc_ju_j\otimes u_j,$$
where $c_j=\frac{n|F_j|}{S(K)}$ for every $1\leqslant j\leqslant m$ (see \cite[Chapter~2]{AGA-book}).

Let $t\gr 0$. Using the Brascamp-Lieb inequality (see \cite[Chapter~2]{AGA-book}) we get
\begin{align*}
\gamma_n(tK)&=\int_{\mathbb{R}^n}\prod_{j=1}^m \chi_{[-th_K(u_j),th_K(u_j)]}(\langle x, u_j\rangle)\frac{e^{-\sum_{j=1}^m\frac{c_j\langle x,u_j\rangle^2}{2}}}{(2\pi )^{n/2}}dx\\
&\leqslant\prod_{j=1}^m \left(\int_{-th_K(u_j)}^{th_K(u_j)}\frac{e^{-\frac{s^2}{2}}}{\sqrt{2\pi }}ds\right)^{c_j}=\prod_{j=1}^m \gamma_1\left(\left[-th_K(u_j),th_K(u_j)\right]\right)^{c_j}.
\end{align*}
Since the 1-dimensional Gaussian measure is log-concave, we have that
\begin{equation*}
\prod_{j=1}^m \gamma_1\left(\left[-th_K(u_j),th_K(u_j)\right]\right)^{c_j}
\leqslant\gamma_1\Big(\Big(\sum_{j=1}^m \frac{tc_jh_K(u_j)}{n}\Big)[-e_1,e_1]\Big)^n=\gamma_n\left(tn\frac{|K|}{S(K)}B_\infty^n\right).
\end{equation*}
Therefore, for any $t\gr 0$ we have that
$$\gamma_n(tK)\leqslant \gamma_n\left(tn\frac{|K|}{S(K)}B_\infty^n\right),$$
and since $M(K)$ is a multiple of the integral $\int_0^{\infty }(1-\gamma_n(tK))dt$ this implies that
$$M(K)\geqslant\frac{1}{n}\frac{S(K)}{|K|}M(B_\infty^n),$$
which is the assertion of the theorem.
\end{proof}

Our last result relates $p (K)$ with the average section parameter $\mathrm{as}(K)$, defined by
$$\mathrm{as}(K)=\int_{S^{n-1}}|K\cap \xi^{\perp}|\, d\sigma(\xi ).$$

\begin{theorem}\label{thm:upper_2}
Let $K$ be a convex body in $\mathbb{R}^n$. Then, $S(K)\gr\frac{n\omega_n}{\omega_{n-1}}\mathrm{as}(K)$.
The estimate is sharp when $K=B_2^n$.
\end{theorem}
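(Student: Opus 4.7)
The plan is to compare the section $K\cap\xi^{\perp}$ with the projection $P_{\xi^{\perp}}(K)$ pointwise in $\xi$ and then average using Kubota's formula.

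First I would recall Kubota's integral formula in the case $j=1$. Since $W_1(K)=S(K)/n$ and sections indexed by $G_{n,n-1}$ can be identified with $S^{n-1}$ modulo sign, Kubota gives
\begin{equation*}
S(K)=nW_1(K)=\frac{n\omega_n}{\omega_{n-1}}\int_{S^{n-1}}|P_{\xi^{\perp}}(K)|\,d\sigma(\xi).
\end{equation*}
This identity is essentially Cauchy's surface area formula and is the only nontrivial input needed.

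Next I would observe that for every $\xi\in S^{n-1}$ one trivially has the inclusion $K\cap\xi^{\perp}\subseteq P_{\xi^{\perp}}(K)$: indeed, if $x\in K\cap\xi^{\perp}$, then $P_{\xi^{\perp}}(x)=x$ since $x\in\xi^{\perp}$, and $x\in K$, so $x\in P_{\xi^{\perp}}(K)$. Consequently $|K\cap\xi^{\perp}|\ls |P_{\xi^{\perp}}(K)|$ for every $\xi$, where both volumes are computed in the $(n-1)$-dimensional subspace $\xi^{\perp}$. Integrating this pointwise inequality over $S^{n-1}$ with respect to $\sigma$ yields
\begin{equation*}
\mathrm{as}(K)=\int_{S^{n-1}}|K\cap\xi^{\perp}|\,d\sigma(\xi)\ls \int_{S^{n-1}}|P_{\xi^{\perp}}(K)|\,d\sigma(\xi)=\frac{\omega_{n-1}}{n\omega_n}S(K),
\end{equation*}
which rearranges to the desired bound $S(K)\gr (n\omega_n/\omega_{n-1})\mathrm{as}(K)$.

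Finally, sharpness at $K=B_2^n$ is immediate: every hyperplane section through the origin equals $B_2^{n-1}$, so $\mathrm{as}(B_2^n)=\omega_{n-1}$, while $S(B_2^n)=n\omega_n$, giving equality. There is really no main obstacle here; the argument is just the combination of Kubota with the trivial containment of section inside projection, and one should note that no symmetry or regularity assumption on $K$ is needed.
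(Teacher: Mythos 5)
Your proof is correct, and it takes a genuinely different (and more elementary) route than the paper's. The paper writes $S(K)=nV(K,\dots,K,B_2^n)$ and uses the polar formula $\mathrm{as}(K)=\omega_{n-1}\int_{S^{n-1}}\rho_K^{n-1}\,d\sigma=\tfrac{\omega_{n-1}}{\omega_n}\tilde V(K,\dots,K,B_2^n)$, so the claim reduces to the mixed-volume versus dual-mixed-volume inequality $V(K,\dots,K,B_2^n)\gr\tilde V(K,\dots,K,B_2^n)$, which it quotes from Lutwak's 1975 paper on dual mixed volumes. You instead invoke Cauchy's surface area formula (Kubota with $j=1$) to express $S(K)$ as $\tfrac{n\omega_n}{\omega_{n-1}}$ times the average projection volume, then use the trivial containment $K\cap\xi^{\perp}\subseteq P_{\xi^{\perp}}(K)$ and integrate. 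The two arguments are closely related — your containment step is in effect a self-contained proof of the needed instance of Lutwak's inequality — but your version avoids dual mixed volumes and the external citation, and a minor bonus is that it does not need the radial function of $K$: the paper's derivation of the $\rho_K$ formula for $\mathrm{as}(K)$ implicitly assumes $0\in\mathrm{int}(K)$, while your inclusion argument is valid regardless. Both establish sharpness at $B_2^n$ by the same direct computation.
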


\begin{proof}
We write $S(K)=nW_1(K)=nV(K,\ldots, K, B_2^n)$ and $$\mathrm{as}(K)=\omega_{n-1}\int_{S^{n-1}}\rho_K^{n-1}(\xi )\, d\sigma(\xi )=\frac{\omega_{n-1}}{\omega_n}\tilde{V}(K,\ldots, K, B_2^n).$$
Therefore, the inequality $S(K)\gr\frac{n\omega_n}{\omega_{n-1}}\mathrm{as}(K)$ is equivalent to
$$V(K,\ldots, K, B_2^n)\gr \tilde{V}(K,\ldots, K, B_2^n),$$
which is true by Corollary~1.3 in \cite{Lutwak-1975}.
\end{proof}

Since in the isotropic position we have that $|K\cap u^{\perp}|\approx\frac{1}{L_K}$, and we also know that
$M(K)\ls\frac{c}{n^{\epsilon }L_K}$ for some $\epsilon >0$ (the currently best known estimate is with $\epsilon =1/10$, due to
Giannopoulos and E.~Milman, see \cite{Giannopoulos-EMilman-2014}) we get

\begin{corollary}
Let $K$ be an isotropic convex body in $\mathbb{R}^n$. Then,
$$S(K)\gr\frac{c\sqrt{n}}{L_K}\gr n^{1/2+\epsilon}|K|M(K),$$
where $\epsilon \gr\frac{1}{10}$. Therefore, $p (K)\gr cn^{1/2+\epsilon }$.
\end{corollary}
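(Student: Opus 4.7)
The plan is to combine Theorem~\ref{thm:upper_2} with the two ingredients quoted in the paragraph immediately preceding the statement: the uniform hyperplane-section estimate $|K\cap u^{\perp}|\approx 1/L_K$ for isotropic bodies (Hensley's theorem), and the Giannopoulos--E.~Milman bound $M(K)\ls c/(n^{\epsilon}L_K)$ with $\epsilon\gr 1/10$. Essentially the entire statement follows by chaining these facts, so I only need to keep track of the normalizations and of the direction of each inequality.

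First, I would apply Theorem~\ref{thm:upper_2} to write
$$S(K)\gr \frac{n\omega_n}{\omega_{n-1}}\,\mathrm{as}(K).$$
Since $K$ is isotropic, integration of the uniform bound $|K\cap\xi^{\perp}|\approx 1/L_K$ over $S^{n-1}$ gives $\mathrm{as}(K)\approx 1/L_K$, while Stirling's formula yields $\frac{n\omega_n}{\omega_{n-1}}\approx\sqrt{n}$. Combining these two approximations produces the first inequality $S(K)\gr c\sqrt{n}/L_K$ of the corollary.

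Next, I would use the isotropic normalization $|K|=1$ together with the Giannopoulos--E.~Milman estimate. Multiplying $M(K)\ls c'/(n^{\epsilon}L_K)$ by $n^{1/2+\epsilon}|K|=n^{1/2+\epsilon}$ gives $n^{1/2+\epsilon}|K|M(K)\ls c'\sqrt{n}/L_K$, which, after absorbing the absolute constant into the one supplied by the previous step, is dominated by $S(K)$. This is the second inequality of the corollary. Finally, dividing the resulting chain $S(K)\gr c_0\, n^{1/2+\epsilon}|K|M(K)$ by $|K|M(K)$ yields $p(K)\gr c_0 n^{1/2+\epsilon}$, as required.

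There is no real obstacle, since the two quoted inputs do all of the heavy lifting. The only point that needs care is bookkeeping: I must check that each constant appearing is genuinely absolute (independent of $n$ and of $K$), and that the asymptotic $\frac{n\omega_n}{\omega_{n-1}}\approx\sqrt{n}$ is being used in the direction that gives a lower bound on $S(K)$, which is the direction needed here.
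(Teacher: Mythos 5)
Your argument is correct and is exactly the route the paper intends: apply Theorem~\ref{thm:upper_2}, note $\mathrm{as}(K)\approx 1/L_K$ and $\frac{n\omega_n}{\omega_{n-1}}\approx\sqrt{n}$ to get $S(K)\gr c\sqrt{n}/L_K$, then invoke $M(K)\ls c'/(n^{\epsilon}L_K)$ together with the isotropic normalization $|K|=1$ to conclude. The bookkeeping of constants and the direction of each inequality are handled correctly, so there is nothing to add.
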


\begin{note*}The only general lower bound that we know for $\mathrm{as}(K)$ is
$\mathrm{as}(K)\gr c\sqrt{n}\frac{|K|}{R(K)}$, from \cite{BGKD}.\end{note*}

\smallskip

The situation is different with the parameter $q(K)$. Regarding the lower bound, if we combine the isoperimetric inequality
$S(K)\gr n\omega_n^{1/n}|K|^{\frac{n-1}{n}}$ with Urysohn's inequality $w(K)\gr {\rm vrad}(K)$, we readily see that
$$w(K)S(K)\gr\frac{|K|^{\frac{1}{n}}}{\omega_n^{1/n}}\,n\omega_n^{1/n}|K|^{\frac{n-1}{n}}=n|K|.$$
Therefore, $q(K)\gr n$ for every convex body $K$ in ${\mathbb R}^n$ with equality if $K=B_2^n$ is the Euclidean unit ball.
However, we observe that for any fixed dimension $n\gr 2$ there is no upper bound for $q(K)$.

\begin{theorem}\label{th:negative-q}For any $n\gr 2$ one has that $q(K)\gr n$ and
$$\sup\{q(K):K\;\mathrm{is \;a \;centrally\;symmetric\;convex\; body\; in}\;{\mathbb R}^n\}=+\infty .$$
\end{theorem}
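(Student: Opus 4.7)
The lower bound $q(K)\gr n$ has already been established in the paragraph preceding the statement, via the isoperimetric inequality combined with Urysohn's inequality, so it remains to exhibit, for any fixed $n\gr 2$, a sequence of centrally symmetric convex bodies along which $q$ tends to infinity.

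The plan is to reuse the family of ``slab-like'' parallelepipeds
$$P_{a,s}=\{x\in {\mathbb R}^n:\,|x_1|\ls s,\ |x_i|\ls a\ \text{for}\ 2\ls i\ls n\}$$
that already appeared in the proofs of Theorems \ref{th:2.1} and \ref{thm:upper_1}, with $s>0$ fixed and $a\to\infty $. First I would compute directly from the product structure that
$$|P_{a,s}|=2^n s a^{n-1},\qquad S(P_{a,s})=2^n a^{n-1}+2^n(n-1)sa^{n-2},$$
so that $S(P_{a,s})/|P_{a,s}|\approx 1/s$ for large $a$, which is uniformly bounded away from $0$ as $a\to\infty $. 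The key step is therefore to estimate the mean width $w(P_{a,s})$ and verify it grows with $a$.

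For this, note that the support function is $h_{P_{a,s}}(\xi )=s|\xi_1|+a\sum_{i=2}^n|\xi_i|$ and that $\int_{S^{n-1}}|\xi_i|\,d\sigma (\xi )\approx 1/\sqrt{n}$ for every $i$. Hence
$$w(P_{a,s})=\int_{S^{n-1}}h_{P_{a,s}}(\xi )\,d\sigma (\xi )\approx \frac{s+(n-1)a}{\sqrt{n}}\approx a\sqrt{n}$$
as $a\to\infty $ (with $n$ and $s$ fixed). Combining the three estimates yields
$$q(P_{a,s})=\frac{w(P_{a,s})S(P_{a,s})}{|P_{a,s}|}\approx \frac{a\sqrt{n}\cdot 2^na^{n-1}}{2^nsa^{n-1}}=\frac{\sqrt{n}}{s}\,a\,\longrightarrow\,+\infty ,$$
which proves the theorem. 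The only non-routine point is the mean-width asymptotics, but for a body whose support function is a positive linear combination of the coordinate functions $|\xi_i|$ this reduces to the standard computation of $\int_{S^{n-1}}|\xi_i|\,d\sigma $, so no real obstacle arises.

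I would remark that the same argument works with any family of ``long thin'' centrally symmetric convex bodies; e.g., the cylinders $C_a=[-a,a]\times B_2^{n-1}$ give $q(C_a)\approx (n-1)(a/\sqrt{n}+1)\to\infty $ as $a\to\infty $, by an analogous three-line computation. This confirms that the unboundedness of $q(K)$ is a robust phenomenon driven by elongation in a single direction, in sharp contrast with the two-sided bounds obtained for $p(K)$ in Theorems \ref{th:2.1} and \ref{parameter_lower}.
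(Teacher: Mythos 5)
Your proof is correct, but it takes a genuinely different route from the paper. The paper proves the unboundedness of $q$ by sticking with ellipsoids: it invokes the relation $S({\cal E})\approx n|{\cal E}|M({\cal E})$ (a consequence of Rivin's formula), rewrites $q({\cal E})\approx n\,w({\cal E})M({\cal E})$, bounds both factors from below by single-coordinate terms to get $q({\cal E})\gr c\,a_n/a_1$, and then sends the aspect ratio $a_n/a_1\to\infty$ under the constraint $\prod a_i=1$. Your argument instead uses the slab-like boxes $P_{a,s}$ (which appear in the paper in the proofs of Theorems~\ref{thm:upper_1} and \ref{parameter_lower}, not Theorem~\ref{th:2.1} as you state, though this is immaterial): you compute $|P_{a,s}|$, $S(P_{a,s})$, and $w(P_{a,s})$ directly from the product structure and the standard fact $\int_{S^{n-1}}|\xi_i|\,d\sigma\approx n^{-1/2}$, obtaining $q(P_{a,s})\approx a\sqrt{n}/s\to\infty$ as $a\to\infty$. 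Your computations are correct. The box approach is more elementary, as it avoids both Rivin's surface-area formula and the equivalence $M_2\approx M$; the paper's ellipsoid approach has the advantage of fitting the running theme of the article, where ellipsoids provide the extremal examples throughout, and of exhibiting cleanly that $q({\cal E})$ is governed by the aspect ratio $a_n/a_1$. Both confirm the same phenomenon: $q$ blows up under elongation in a single direction while volume is normalized.

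Two small remarks. First, the lower bound $q(K)\gr n$ holds for \emph{all} convex bodies (isoperimetric plus Urysohn), not only centrally symmetric ones, so your opening sentence is slightly more restrictive than necessary, though the statement only asserts unboundedness over the symmetric class. Second, in your cylinder remark the displayed asymptotic $q(C_a)\approx (n-1)(a/\sqrt{n}+1)$ should read, more precisely, that $S(C_a)/|C_a|\to n-1$ and $w(C_a)\approx a/\sqrt{n}$ for large $a$, giving $q(C_a)\approx (n-1)a/\sqrt{n}$; the conclusion $q(C_a)\to\infty$ stands either way.
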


\begin{proof}We have explained the first assertion and for the second one we consider, again, the class of ellipsoids.
Let ${\cal E}$ be an ellipsoid in ${\mathbb R}^n$ with semi-axes $a_1\ls a_2\ls\cdots \ls a_n$ in the directions of $e_1,\ldots ,e_n$.
We may assume that $\prod_{i=1}^na_i=1$. Recall that $S({\cal E})\approx n|{\cal E}|M({\cal E})$, and hence
$$q({\cal E})=\frac{w({\cal E})S({\cal E})}{|{\cal E}|}\approx nw({\cal E})M({\cal E}).$$
Now,
$$M({\cal E})=\int_{S^{n-1}}\Big (\sum_{i=1}^n\frac{\xi_i^2}{a_i^2}\Big)^{1/2}d\sigma (\xi )$$
and $$w({\cal E})=M({\cal E}^{\circ })=\int_{S^{n-1}}\Big (\sum_{i=1}^n\xi_i^2a_i^2\Big)^{1/2}d\sigma (\xi ).$$
It follows that
$$q({\cal E})\gr cn\int_{S^{n-1}}\frac{|\xi_1|}{a_1}d\sigma (\xi )\cdot\int_{S^{n-1}}|\xi_n|a_nd\sigma (\xi )\approx \frac{a_n}{a_1}.$$
It is clear that if we choose $a_1=\cdots =a_{n-1}=a<1$ and $a_n=1/a^{n-1}$ then $\prod_{i=1}^na_i=1$ and $q({\cal E})\gr c/a^n\to\infty $
as $a\to 0^+$, which proves our claim. \end{proof}

\section{Isomorphic Busemann-Petty problem for the surface area}

Busemann-Petty type problems for surface area are closely related to the questions we address in this article. 
A first question that may be asked is if an origin-symmetric convex body is uniquely determined by the surface area of its hyperplane
central sections; it is well-known (see \cite{RYZ-survey}) that origin-symmetric star bodies are uniquely determined by the volume of their central sections. In its simplest form, when $n = 3$, this question is asked by Gardner in his book \cite{Gardner-book}: If $K$ and $L$
are two origin-symmetric convex bodies in $\mathbb{R}^3$ such that the sections $K\cap\xi^{\perp}$ and $L\cap\xi^{\perp}$ have equal perimeters for all $\xi\in S^{2}$ is it then true that $K=L$? To the best of our knowledge, the problem is open in full generality.
An affirmative answer is given in \cite{Howard-Naza-Rya-Zva} for the class of $C^1$ star bodies of revolution, and an infinitesimal 
version of the problem is settled in \cite{rusu-thesis} where it is shown that the answer is affirmative if one of the bodies is the Euclidean ball 
and the other is its one parameter analytic deformation. Yaskin has proved in \cite{Yaskin} that the answer is affirmative for 
the class of origin-symmetric convex polytopes in $\mathbb{R}^n$, where in dimensions $n\gr 4$ the perimeter is replaced by the surface 
area of the sections.

The analogue of the Busemann-Petty problem for surface area was studied by Koldobsky and K\"{o}nig in \cite{KK-2019}: 
If $K$ and $D$ are two convex bodies in ${\mathbb R}^n$ such that
$S(K\cap\xi^{\perp })\ls S(D\cap\xi^{\perp })$ for all $\xi\in S^{n-1}$ does it then follow that $S(K)\ls S(D)$? Answering a
question of Pe\l czynski, they prove that the central $(n-1)$-dimensional section of the cube $B_{\infty }^n=[-1,1]^n$ that has
maximal surface area is the one that corresponds to the unit vector $\xi_0=\frac{1}{\sqrt{2}}(1,1,0,\ldots ,0)$ (exactly as in
the case of volume) i.e.
$$\max_{\xi\in S^{n-1}}S(B_{\infty }^n\cap\xi^{\perp })=S(B_{\infty }^n\cap\xi_0^{\perp })=2((n-2)\sqrt{2}+1).$$
Comparing with a ball of suitable radius one gets that the answer to the Busemann-Petty problem for surface area is negative
in dimensions $n\gr 14$.
 It is natural to ask whether an isomorphic version of the problem has an affirmative answer. This corresponds
to finding a constant $\beta_n$ (possibly independent from the dimension $n$) such that if $K$ and $D$ are two convex bodies
in ${\mathbb R}^n$ with $S(K\cap\xi^{\perp })\ls S(D\cap\xi^{\perp })$ for all $\xi\in S^{n-1}$ then $S(K)\ls \beta_nS(D)$.

Starting from the equivalence of the isomorphic Busemann--Petty problem with the slicing problem, one may think of the
corresponding connection if we consider surface area in place of volume. Suppose that there is a constant $\gamma_n$ such that
if $K$ and $D$ are centrally symmetric convex bodies in $\mathbb{R}^n$ that satisfy
$$S(K\cap\xi^{\perp})\ls S(D\cap\xi^{\perp}),$$ for all $\xi\in S^{n-1}$, then $S(K)\ls \gamma_n S(D)$.
Now, let $K$ be a convex body in $\mathbb{R}^n$ and choose $\xi_0\in S^{n-1}$ such that
$$S(K\cap\xi_0^{\perp})=\max_{\xi\in S^{n-1}} S(K\cap\xi^{\perp})$$
and $r>0$ such that $r^{n-2}S(B_2^{n-1})=S(rB_2^{n-1})=S(K\cap\xi_0^{\perp})$. Then,
$$S(K\cap\xi^{\perp})\ls S(rB_2^{n}\cap\xi^{\perp}),$$ for all $\xi\in S^{n-1}$. Therefore,
$$S(K)^{\frac{n-2}{n-1}}\ls \gamma_n^{\frac{n-2}{n-1}}S(rB_2^n)^{\frac{n-2}{n-1}}=\gamma_n^{\frac{n-2}{n-1}}\frac{S(B_2^n)^{\frac{n-2}{n-1}}}{S(B_2^{n-1})}\max_{\xi\in S^{n-1}} S(K\cap\xi^{\perp}).$$
This implies that there is some constant $c(n)$ such that
\begin{equation}\label{slicing-1}
S(K)\ls c(n)S(K)^{\frac{1}{n-1}}\max_{\xi\in S^{n-1}} S(K\cap\xi^{\perp}).
\end{equation}
The validity of \eqref{slicing-1} is a new question, which is of course related to the question that we discuss in this article.

We start with an estimate for ellipsoids.

\begin{proposition}\label{sec-5-1}Let ${\cal E}$ be an origin symmetric ellipsoid in ${\mathbb R}^n$. Then,
$$\frac{S(\mathcal{E})}{\max\limits_{\xi\in S^{n-1}}S(\mathcal{E}\cap\xi^{\perp })}\ls D_nr(\mathcal{E})^{-\frac{1}{n-1}}$$
where $D_n>0$ is bounded by an absolute constant.
\end{proposition}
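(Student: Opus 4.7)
The plan is to combine Theorem~\ref{th:max-surface-ellipsoid-section}, which identifies the hyperplane section of $\mathcal{E}$ of maximal surface area, with Rivin's formula \eqref{eq:rivin}, and then to compare the two resulting spherical integrals using the Gaussian tools already set up in the proof of Theorem~\ref{th:negative-1b}.

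First I would diagonalise: choose coordinates in which the semi-axes of $\mathcal{E}$ lie along $e_1,\ldots ,e_n$ with lengths $a_1\le a_2\le\cdots\le a_n$, so that $r(\mathcal{E})=a_1$ (the largest Euclidean ball centred at $0$ inscribed in $\mathcal{E}$ has radius $a_1$ by origin symmetry). Theorem~\ref{th:max-surface-ellipsoid-section}, applied with $k=n-1$ and $j=0$, gives
$$\max_{\xi\in S^{n-1}}S(\mathcal{E}\cap\xi^{\perp})=S(\mathcal{E}\cap e_1^{\perp}),$$
and the right-hand side is the surface area of the $(n-1)$-dimensional ellipsoid with semi-axes $a_2,\ldots ,a_n$. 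Applying Rivin's formula \eqref{eq:rivin} to both $\mathcal{E}\subset\mathbb{R}^n$ and $\mathcal{E}\cap e_1^{\perp}\subset\mathbb{R}^{n-1}$, and using $|\mathcal{E}|=\omega_n\prod_i a_i$ together with $|\mathcal{E}\cap e_1^{\perp}|=\omega_{n-1}\prod_{i\ge 2}a_i$, reduces the desired ratio to
$$\frac{S(\mathcal{E})}{S(\mathcal{E}\cap e_1^{\perp})}=\frac{n\omega_n}{(n-1)\omega_{n-1}}\,a_1\,\cdot\,\frac{\int_{S^{n-1}}\big(\sum_{i=1}^n\xi_i^2/a_i^2\big)^{1/2}\,d\sigma(\xi)}{\int_{S^{n-2}}\big(\sum_{i=2}^n\xi_i^2/a_i^2\big)^{1/2}\,d\sigma(\xi)}.$$

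Next I would pass to Gaussian integrals using $\int_{S^{k-1}}\|\xi\|\,d\sigma=(1/d_k)\,\mathbb{E}\|G\|$ with $d_k\approx\sqrt{k}$, so that the quotient of spherical integrals becomes $(d_{n-1}/d_n)$ times the ratio of Gaussian expectations $\mathbb{E}\big[(\sum_{i=1}^n g_i^2/a_i^2)^{1/2}\big]/\mathbb{E}\big[(\sum_{i=2}^n g_i^2/a_i^2)^{1/2}\big]$. The numerator I would control from above by the subadditivity $\sqrt{A^2+B^2}\le A+B$, giving $\mathbb{E}|g_1|/a_1+\mathbb{E}\big[(\sum_{i\ge 2} g_i^2/a_i^2)^{1/2}\big]$, while the denominator I would bound from below via Khintchine's inequality in Gauss space, obtaining $\mathbb{E}\big[(\sum_{i\ge 2} g_i^2/a_i^2)^{1/2}\big]\gtrsim\big(\sum_{i\ge 2}1/a_i^2\big)^{1/2}$, exactly as in the proof of Theorem~\ref{th:negative-1b}. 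The volume ratio $\omega_n/\omega_{n-1}$ and the Gaussian normaliser ratio $d_{n-1}/d_n$ have explicit (dimension-dependent) behaviour that can be absorbed into the constant $D_n$.

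The main obstacle will be the final algebraic calibration: after the previous steps the bound on $S(\mathcal{E})/S(\mathcal{E}\cap e_1^{\perp})$ has become an explicit symmetric function of the $a_i$'s, and one must optimise over the configurations of $a_1,\ldots ,a_n$ to extract precisely the factor $r(\mathcal{E})^{-1/(n-1)}=a_1^{-1/(n-1)}$. The worst-case configurations should be the nearly-flat ellipsoids, where $a_1$ is much smaller than the remaining semi-axes, and it is in this regime that the exponent $1/(n-1)$ should emerge from a power-mean comparison between the two Gaussian expectations. Once established for ellipsoids, this inequality will feed, via John's theorem applied to an arbitrary convex body $K$, into the general statement of Theorem~\ref{sec-7}.
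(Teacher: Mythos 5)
Your plan coincides with the paper's proof almost step for step: diagonalise, use Theorem~\ref{th:max-surface-ellipsoid-section} to identify $e_1^{\perp}$ as the extremal hyperplane, apply Rivin's formula to $\mathcal{E}$ and to $\mathcal{E}\cap e_1^{\perp}$, pass to Gaussian integrals via $d_k$, and bound the ratio of Gaussian expectations. The only place you deviate is in controlling the numerator: you split it by the pointwise subadditivity $\sqrt{A^2+B^2}\ls A+B$, whereas the paper simply uses Jensen, $\mathbb{E}X^{1/2}\ls(\mathbb{E}X)^{1/2}$, together with the Khintchine lower bound on the denominator, to get the single clean bound
$$\frac{\mathbb{E}\big(\sum_{i=1}^n g_i^2/a_i^2\big)^{1/2}}{\mathbb{E}\big(\sum_{i=2}^n g_i^2/a_i^2\big)^{1/2}}\ls c\left(\frac{\sum_{i=1}^n 1/a_i^2}{\sum_{i=2}^n 1/a_i^2}\right)^{1/2}=c\left(1+\frac{1}{\sum_{i=2}^n a_1^2/a_i^2}\right)^{1/2}.$$
Both routes give a bound of the same quality, so this is a harmless variant.

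Two points worth making explicit. First, the statement is not scale invariant (the left side is degree $1$ in dilations, the right side degree $-1/(n-1)$), so you must fix a normalisation; the paper takes $|\mathcal{E}|=1$, equivalently $\prod_i a_i=1$ up to a constant, and this is exactly what Theorem~\ref{sec-5-2} requires when it invokes $1/r(K)\ls S(K)$ for volume-one bodies. Second, the ``final algebraic calibration'' that you flag as the main obstacle is in fact a one-line AM--GM: with $\prod_i a_i=1$ one has $\sum_{i\gr 2}a_1^2/a_i^2\gr(n-1)a_1^2\big(\prod_{i\gr 2}a_i^{-2}\big)^{1/(n-1)}=(n-1)a_1^{2n/(n-1)}$, and combined with $a_1\ls 1$ (hence $1\ls a_1^{-2n/(n-1)}$) this immediately produces the factor $a_1^{-1/(n-1)}=r(\mathcal{E})^{-1/(n-1)}$ with a dimension-free constant. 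So the ``optimisation over configurations'' you anticipate collapses to a single inequality; no case analysis or extremal-configuration argument is needed.
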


\begin{proof}We may assume that $|{\cal E}|=1$. Let $a_1\ls\cdots\ls a_n$ be the lengths of its principal semi-axes
of ${\cal E}$ in the directions of $e_1,\ldots ,e_n$. We have seen that
$$\max_{\xi }S({\cal E}\cap\xi^{\perp })=S({\cal E}\cap e_1^{\perp })=(n-1)\,|{\cal E}\cap e_1^{\perp }|\,\int_{S^{n-2}}\Big (\sum_{i=2}^n\frac{\xi_i^2}{a_i^2}\Big)^{1/2}d\sigma (\xi ).$$
Then,
$$\frac{S(\mathcal{E})}{\max\limits_{\xi\in S^{n-1}}S(\mathcal{E}\cap\xi^{\perp })}=C_na_1\frac{\mathbb{E}\left[\left  (\sum_{i=1}^n\frac{g_i^2}{a_i^2}\right)^{1/2}\right]}{\mathbb{E}\left[\left (\sum_{i=2}^n\frac{g_i^2}{a_i^2}\right)^{1/2}\right]},$$
where $C_n$ is bounded by an absolute constant.

Since
\begin{equation*}\frac{\mathbb{E}\left[\left  (\sum_{i=1}^n\frac{g_i^2}{a_i^2}\right)^{1/2}\right]}{\mathbb{E}\left[\left (\sum_{i=2}^n\frac{g_i^2}{a_i^2}\right)^{1/2}\right]}
\ls c\left (\frac{{\mathbb E}\Big(\sum_{i=1}^n\frac{g_i^2}{a_i^2}\Big)}{{\mathbb E}\Big (\sum_{i=2}^n\frac{g_i^2}{a_i^2}\Big)}\right )^{1/2}
= c\left (\frac{\sum_{i=1}^n\frac{1}{a_i^2}}{\sum_{i=2}^n\frac{1}{a_i^2}}\right )^{1/2},
\end{equation*}
we have that $$\frac{S(\mathcal{E})}{\max\limits_{\xi\in S^{n-1}}S(\mathcal{E}\cap\xi^{\perp })}\ls C_na_1\left (\frac{\sum_{i=1}^n\frac{1}{a_i^2}}{\sum_{i=2}^n\frac{1}{a_i^2}}\right )^{1/2}=C_n\left(1+\frac{1}{\sum_{i=2}^n\frac{a_1^2}{a_i^2}}\right )^{1/2}.$$
Using the arithmetic-geometric mean inequality we get
\begin{align*}
\sum_{i=2}^n\frac{a_1^2}{a_i^2}\gr (n-1)a_1^2\left(\frac{1}{a_2^2\ldots a_n^2}\right)^{\frac{1}{n-1}}=(n-1)a_1^2a_1^{\frac{2}{n-1}}=(n-1)a_1^{\frac{2n}{n-1}}.
\end{align*}
Moreover, $1\ls\frac{1}{a_1^{\frac{2n}{n-1}}}$ and adding these two inequalities we get
$$\left(1+\frac{1}{\sum_{i=2}^n\frac{a_1^2}{a_i^2}}\right )^{1/2}\ls
\left(\frac{1}{a_1^{\frac{2n}{n-1}}}+\frac{1}{(n-1)a_1^{\frac{2n}{n-1}}}\right)^{\frac{1}{2}},$$ therefore
$$\frac{S(\mathcal{E})}{\max\limits_{\xi\in S^{n-1}}S(\mathcal{E}\cap\xi^{\perp })}\ls D_n\frac{1}{a_1^{\frac{1}{n-1}}}=D_n\frac{1}{r(\mathcal{E})^{\frac{1}{n-1}}},$$
where $D_n$ is bounded by an absolute constant.
\end{proof}

\begin{remark}\rm The example of an ellipsoid $\mathcal{F}$ with $a_2=\ldots=a_n=r$ and $a_1=\frac{1}{r^{n-1}}$ gives that
$$\frac{S(\mathcal{F})}{\max_{\xi\in S^{n-1}}S(\mathcal{F}\cap\xi^{\perp })}\gr E_n\frac{1}{r(\mathcal{F})^{\frac{1}{n-1}}},$$
therefore the inequality of Proposition~\ref{sec-5-1} is sharp.
\end{remark}

From Theorem~\ref{thm:upper_1} we know that $\frac{1}{r(K)}\ls S(K)$ for every convex body $K$ of volume $1$ in ${\mathbb R}^n$.
Combining this fact with Proposition~\ref{sec-5-1} we immediately get the next theorem which confirms \eqref{slicing-1} for the class of ellipsoids.

\begin{theorem}\label{sec-5-2}Let ${\cal E}$ be an origin symmetric ellipsoid in ${\mathbb R}^n$.
Then,
$$S(\mathcal{E})\ls A_nS(\mathcal{E})^{\frac{1}{n-1}}\max\limits_{\xi\in S^{n-1}}S(\mathcal{E}\cap\xi^{\perp })$$
where $A_n>0$ is bounded by an absolute constant.
\end{theorem}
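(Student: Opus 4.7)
The plan is to derive Theorem \ref{sec-5-2} as a direct consequence of the two ingredients that immediately precede it: Proposition \ref{sec-5-1}, which bounds $S(\mathcal{E})/\max_\xi S(\mathcal{E}\cap\xi^\perp)$ in terms of $r(\mathcal{E})^{-1/(n-1)}$, and Theorem \ref{thm:upper_1}, which converts an inverse inradius bound into a surface area bound. The first preliminary step is to reduce to the normalized case $|\mathcal{E}|=1$, which is permitted because the inequality is scale-invariant: under $\mathcal{E}\mapsto\lambda\mathcal{E}$ one has $S(\lambda\mathcal{E})=\lambda^{n-1}S(\mathcal{E})$ and $S(\lambda\mathcal{E}\cap\xi^\perp)=\lambda^{n-2}S(\mathcal{E}\cap\xi^\perp)$, so both sides scale as $\lambda^{n-1}$ (the right-hand side picks up $(\lambda^{n-1})^{1/(n-1)}\cdot\lambda^{n-2}=\lambda^{n-1}$).

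Assuming $|\mathcal{E}|=1$, Proposition \ref{sec-5-1} yields
$$S(\mathcal{E})\ls D_n\,r(\mathcal{E})^{-\frac{1}{n-1}}\max_{\xi\in S^{n-1}}S(\mathcal{E}\cap\xi^{\perp}),$$
where $D_n$ is bounded by an absolute constant. On the other hand, Theorem \ref{thm:upper_1} gives $|\mathcal{E}|/r(\mathcal{E})\ls S(\mathcal{E})$, which under our normalization becomes $r(\mathcal{E})^{-1}\ls S(\mathcal{E})$ and hence
$$r(\mathcal{E})^{-\frac{1}{n-1}}\ls S(\mathcal{E})^{\frac{1}{n-1}}.$$
Substituting this estimate into the bound from Proposition \ref{sec-5-1} yields
$$S(\mathcal{E})\ls D_n\,S(\mathcal{E})^{\frac{1}{n-1}}\max_{\xi\in S^{n-1}}S(\mathcal{E}\cap\xi^{\perp}),$$
so we may take $A_n=D_n$, which is bounded by an absolute constant as required.

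There is no real obstacle; the argument is essentially a one-line combination of the two preceding results, and the only subtle point is verifying the scale invariance so that the reduction to volume one is legitimate. I would take care to record explicitly the scaling computation of both sides before applying Proposition \ref{sec-5-1} and Theorem \ref{thm:upper_1}, since otherwise one might worry that the normalization $|\mathcal{E}|=1$ interacts nontrivially with the power $1/(n-1)$ appearing on the right.
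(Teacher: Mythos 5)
Your proof is correct and follows exactly the paper's route: combine Proposition~\ref{sec-5-1} with the inequality $r(\mathcal{E})^{-1}\ls S(\mathcal{E})$ from Theorem~\ref{thm:upper_1} under the normalization $|\mathcal{E}|=1$. Your explicit verification that both sides scale as $\lambda^{n-1}$ is a worthwhile addition, since Proposition~\ref{sec-5-1} as printed is not homogeneous (its proof normalizes to $|\mathcal{E}|=1$, and its conclusion really only holds under that normalization), so the reduction to volume one is indeed the point that deserves a sentence.
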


Using John's theorem and the monotonicity of surface area one can easily deduce that a similar estimate holds
true in full generality: For any convex body $K$ in ${\mathbb R}^n$ one has
$$S(K)\ls A_n^{\prime }S(K)^{\frac{1}{n-1}}\max\limits_{\xi\in S^{n-1}}S(K\cap\xi^{\perp })$$
where $A_n^{\prime }>0$ is a constant depending only on $n$. It is an interesting question to determine the
best possible behavior of the constant $A_n^{\prime }$ with respect to the dimension $n$.

\bigskip

\noindent {\bf Acknowledgements:} We would like to thank A.~Giannopoulos and A.~Koldobsky for helpful discussions
We acknowledge support by the Hellenic Foundation for Research and Innovation (H.F.R.I.) under the ``First Call for H.F.R.I.
Research Projects to support Faculty members and Researchers and the procurement of high-cost research equipment grant" (Project
Number: 1849).

\bigskip

\bigskip

\footnotesize
\bibliographystyle{amsplain}

\medskip

\thanks{\noindent {\bf Keywords:} Convex body, volume, surface area, slicing inequality, sections and projections.

\smallskip

\thanks{\noindent {\bf 2010 MSC:} Primary 52A20; Secondary 46B06, 52A40, 52A38, 52A23.}

\bigskip

\bigskip

\noindent \textsc{Silouanos \ Brazitikos}: Department of
Mathematics, National and Kapodistrian University of Athens, Panepistimioupolis 157-84,
Athens, Greece.

\smallskip

\noindent \textit{E-mail:} \texttt{silouanb@math.uoa.gr}

\bigskip

\noindent \textsc{Dimitris-Marios \ Liakopoulos}: Department of
Mathematics, National and Kapodistrian University of Athens, Panepistimioupolis 157-84,
Athens, Greece.

\smallskip

\noindent \textit{E-mail:} \texttt{dliakop@math.uoa.gr}

\bigskip

\end{document}